\documentclass{amsart}
\usepackage{math}
\usepackage{tikz,mathabx,mathrsfs,mathtools,amsrefs}
\usetikzlibrary{cd,quotes,angles,decorations}
\usetikzlibrary{arrows,automata}
\newenvironment{fsa}[1][auto]{\begin{tikzpicture}[->,>=stealth',
    shorten >=1pt,auto,node distance=3cm,double distance between line centers=0.45ex,
    initial text=,accepting/.style=accepting by arrow,
    every state/.style={inner sep=3pt,minimum size=0pt},
    every loop/.style={looseness=12},semithick,#1]}{\end{tikzpicture}}
\pgfdeclaredecoration{single line}{initial}{
  \state{initial}[width=\pgfdecoratedpathlength-1sp]{\pgfpathmoveto{\pgfpointorigin}}
  \state{final}{\pgfpathlineto{\pgfpointorigin}}
}

\newtheorem{mainthm}{Theorem}

\begin{document}
\title{The Thue-Morse shift, Baumslag-Solitar group, and biminimality}
\author{Laurent Bartholdi}
\address{\'Ecole Normale Sup\'erieure, Lyon\and Mathematisches Institut, Georg-August Universit\"at zu G\"ottingen}
\email{laurent.bartholdi@gmail.com}
\date{October 16th, 2019}
\begin{abstract}
  Call a group action on a topological space \emph{biminimal} if for
  any points $x,y\in X$ there exists a group element taking $x$
  arbitrarily close to $y$ and whose inverse takes $y$ arbitrarily
  close to $x$.

  A symbolic encoding of the Thue-Morse dynamical system is given, in
  terms of $\omega$-automata. It is used to prove that the Thue-Morse
  dynamical system is minimal but not biminimal.

  The $\omega$-automata also establish a link between Nekrashevych's
  presentation of limit spaces and solenoids with a construction
  described by Vershik and Solomyak.
\end{abstract}
\maketitle

\section{Introduction}
The goals of this note are threefold: develop, on one important
example, the general theory of automatic actions [in preparation]; 
give a symbolic / topological description of the Thue-Morse shift; and
answer a question of Dominik Franc\oe ur related to a strengthening of
minimality of actions.

It is my belief that salient aspects of the theory of automatic
actions become more digestible when specialized to an example, in
particular if this example is fundamental. Note that Vershik and
Solomyak gave in~\cite{vershik-solomyak:adic} an excellent account of
the Thue-Morse shift and its interpretation as an adic transformation;
however, that description is only valid measure-theoretically; by
force of some simplifications adopted in their construction, the map
they construct cannot be continuous.

I elected not to present the results in the most economical or terse
possible way, because part of my motivation is to establish
connections with the classical notation and terminology. The
fundamental messages should be: finite state automata are useful in
more than one manner, to encode in finite, easily manipulable objects
various infinite sets or languages; and it helps to keep track of the
group (or semigroup) acting on the space in question. To paraphrase
Louis Ferdinand C\'eline~\cite{celine:vabdln}*{incipit} (as suggested
by Jacques Sakarovich),
\begin{quote}
  \emph{$\ll$ Les automates, c'est l'infini mis \`a port\'ee des caniches $\gg$}.
\end{quote}

\subsection{The Thue-Morse automaton}
The Thue-Morse map, originally considered by
Thue~\cite{thue:zeichenreihen}, and Morse~\cite{morse:geodesics} in
his study of geodesics on surfaces, is the endomorphism $\zeta$ of
$\{0,1\}^*$ is given by $0\mapsto01,\;1\mapsto10$. Let
$\Omega\subset\{0,1\}^\Z$ denote those $w\in\{0,1\}^\Z$ such that all
subwords of $w$ occur in $\zeta^n(0)$ for some $n$. It is a compact,
totally disconnected set naturally carrying a $\Z$-action by
translation.

$\Omega$ may be recoded in terms of paths in a ``Bratteli-Vershik
diagram'', on which the action of $\Z$ may be defined
combinatorially. However, the classical diagram
\[\begin{tikzpicture}[->,>=stealth']
    \foreach \i in {0,...,6} {
      \path (\i,0) edge +(1,0) ++(0,0) edge +(1,1.7) ++(0,1.7) edge +(1,0) ++(0,0) edge +(1,-1.7);
    }
    \clip (7,-0.1) rectangle (7.7,1.8);
    \draw[dotted] (7,0) -- +(1,0) ++(0,0) -- +(1,1.7) ++(0,1.7) -- +(1,0) ++(0,0) -- +(1,-1.7);
    
  \end{tikzpicture}
\]
is only valid measure-theoretically; there is a map
$\Omega\twoheadrightarrow\{\text{paths in the diagram above}\}$ which
is almost-everywhere bijective, but is $2:1$ on a countable set, on
which the the natural image of the shift map is discontinuous. The
otherwise-excellent reference~\cite{durand:cant} remarks dryly that
``this diagram [\dots] does not fit our setting''.

On the other hand, a powerful technique to encode dynamical systems
has been developed by Nekrashevych~\cite{nekrashevych:ssg}; in a word,
``Nekrashevych duality'' establishes an equivalence between certain
self-similar group actions and self-coverings of spaces, which doesn't
apply to $\Omega$ -- but almost does. If a picture is worth a thousand
words, the main outcome of the present paper is a finite automaton:
\[\begin{fsa}[every state/.style={minimum size=6mm,text width=6mm,inner sep=0mm,align=center},scale=0.85]
    \node at (-6,1.6) {\large $\mathscr A_\mu$:};
    \node[state] (a) at (1,2) {$a$};
    \node[state] (d) at (1,0) {$d$};
    \node[state] (c) at (-1,0) {$c$};
    \node[state] (f) at (-1,-2) {$f$};
    \node[state] (b) at (-1,2) {$b$};
    \node[state] (e) at (1,-2) {$e$};
    \node[state] (muca) at (-3,2) {$\mu$};
    \node[state] (muba) at (-3,0) {$\mu$};
    \node[state] (muce) at (-3,-2) {$\mu$};
    \node[state] (muad) at (-5,0) {$\mu$};
    \node[state] (mube) at (-7,0) {$\mu$};
    \node[state] (mudf) at (3,-2) {$\mu$};
    \node[state] (muef) at (3,0) {$\mu$};
    \node[state] (mudb) at (3,2) {$\mu$};
    \node[state] (mufc) at (5,0) {$\mu$};
    \node[state] (mueb) at (7,0) {$\mu$};
    \foreach\i/\j/\l in {a/c/1_c,c/f/0_f,f/d/1_d,d/a/0_a,muca/b/0_b|1_b,muba/c/0_c|1_c,muce/f/0_f|1_f,mudf/e/0_e|1_e,muef/d/0_d|1_d,mudb/a/0_a|1_a,muad/muca/1_c|0_a,muad/muba/1_b|0_a,muad/muce/1_c|0_e,mufc/mudb/1_d|0_b,mufc/muef/1_e|0_f,mufc/mudf/1_d|0_f} {
      \draw[->] (\i) -- node [inner sep=1pt] {\small $\l$} (\j);
    };
    \foreach\i/\j\l in {a/b/1_b,b/a/1_a,b/c/0_c,c/b/0_b,d/e/0_e,e/d/0_d,e/f/1_f,f/e/1_e,muad/mube/1_b|0_e,mueb/mufc/1_f|0_c,mube/muad/1_a|0_d,mufc/mueb/1_e|0_b} {
      \draw[->,decorate,decoration={single line,raise=2pt}] (\i) -- node {\small $\l$} (\j);
    };
  \end{fsa}
\]
The inner six states encode the Thue-Morse shift $\Omega$: there is an
easily-computable bijection between $\Omega$ and the set of
right-infinite paths in this $6$-vertex graph. Paths are naturally
identified with the labels read along them. The $\Z$-action is
described by the other states of $\mathscr A_\mu$: to compute the
image of a path labeled $w$, find the unique path starting at a state
labeled $\mu$ and carrying a label of the form $w|w'$ (where we
interpret labels $i_j$ as $i_j|i_j$); then the image of $w$ is $w'$.

Call $\mathscr A$ the subautomaton of $\mathscr A_\mu$ spanned by the
inner six vertices; denote by $L(\mathscr A)$ the space of infinite
paths in $\mathscr A$ (topologized by declaring paths close if they
agree on a long initial segment). We prove:
\begin{mainthm}[= Theorems~\ref{thm:factor} and~\ref{thm:homeo}]
  There is a homeomorphism $\Omega\cong L(\mathscr A)$. Using it, there
  is a natural factor map $\pi\colon\Omega\twoheadrightarrow\Z_2$,
  given by sending label $i_j$ to $i$ in $\mathscr A_\mu$, and mapping
  the shift action on $\Omega$ to the natural $\Z$-action on $2$-adic
  integers. It is generically $2:1$, and $4:1$ on $\pi^{-1}(\Z)$.

  Define $\widetilde\Z_2$ as a modification of $\Z_2$ in which the
  copy of $\Z$ is duplicated. Then there is a homeomorphism
  $\Omega\cong\Z/2\cdot\widetilde\Z_2$, a $2$-point extension of
  $\widetilde\Z_2$, under which the shift's action on $\Omega$ is
  transported to ``addition with a cocycle''
  $(s,z)\mapsto(s+\phi(z+1),z+1)$ for the map
  $\phi\colon\widetilde\Z_2\to\Z/2$ given in~\eqref{eq:phi}; and the
  factor map $\Omega\twoheadrightarrow\Z_2$ is given by the natural
  maps $\Z/2\to1$ and $\widetilde\Z_2\to\Z_2$.
\end{mainthm}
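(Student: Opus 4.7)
The plan is to split the theorem into its two logical halves: first a purely combinatorial/topological identification $\Omega\cong L(\mathscr A)$, and only then an analysis of the forgetful labelling $i_j\mapsto i$, from which the factor map to $\Z_2$, the fibre counts, and the $\widetilde\Z_2$-picture all follow mechanically.

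For the first half I would read the substitution $\zeta$ off the six states $a,\dots,f$: each state should correspond to a letter in $\{0,1\}$ together with a ``role'' (left/right half of a $\zeta$-block, at each of two nested levels), and the transitions of $\mathscr A$ should be exactly those allowed by consecutive occurrences of such roles in a word of the form $\zeta^n(0)$. One constructs the map $\Omega\to L(\mathscr A)$ by performing the canonical de-substitution: given $w\in\Omega$, parse it uniquely as $\zeta(w^{(1)})$ with a choice of origin, then $w^{(1)}=\zeta(w^{(2)})$, etc., and at each step record the state describing where the index $0$ sits in the current block. Uniqueness of this parsing (which is standard for the Thue-Morse substitution, since $\zeta$ is injective and aperiodic) gives both injectivity and surjectivity. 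Both spaces are compact metric, and the parsing is local in the sense that the first $n$ states of the path depend only on a bounded window of $w$ and conversely; this gives continuity in both directions and hence a homeomorphism.

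For the second half, reading the $i$-part of the $i_j$-labels along a path gives a map $\Omega\to\{0,1\}^\N=\Z_2$. That the shift on $\Omega$ becomes $+1$ on $\Z_2$ is precisely what the ``$\mu$''-subautomaton encodes: each $\mu$-state, by construction, takes an input word $w|w'$ with $w'$ the image of $w$ under the relevant generator, and the inner six states make the coordinates commute with $+1$ on the 2-adic digits. Fibre counting reduces to counting how many paths in $\mathscr A$ project to a given $\{0,1\}^\N$-label: the global parity choice at the top of the substitution tower gives a generic $2:1$ factor, and at paths whose $\{0,1\}$-label is eventually constant — i.e.\ over $\Z\subset\Z_2$ — a second ambiguity appears at the ``seam'' where two consistent parsings glue differently, doubling the count to $4:1$.

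The description as $\Z/2\cdot\widetilde\Z_2$ then falls out: the duplication $\widetilde\Z_2$ is exactly the quotient of $\Omega$ by the $\Z/2$ generated by the generic parity swap, and the $4:1$ points at $\Z$ are precisely the duplicated copies. The cocycle $\phi$ of~\eqref{eq:phi} should be computed by tracking, in $\mathscr A_\mu$, which $\mu$-to-$\mu$ transitions flip the parity tag; writing the shift as $(s,z)\mapsto(s+\phi(z+1),z+1)$ is then a direct read-off from the edge labels. I expect the main obstacle to be the bookkeeping at the countably many ``seam'' points: establishing that the $\widetilde\Z_2$-description is genuinely a \emph{continuous} model (which the classical Bratteli-Vershik diagram fails to achieve) requires verifying that the natural map $\Omega\to\widetilde\Z_2$ separates the $4$ preimages of each $n\in\Z$ into the two $\Z/2$-cosets times the two copies of $n$, in a way compatible with the topology of $\widetilde\Z_2$; this is the place where the explicit $\omega$-automaton $\mathscr A_\mu$ does the real work.
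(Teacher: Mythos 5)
Your plan follows essentially the same route as the paper: encode $\Omega$ by iterated de-substitution into paths of a six-state automaton obtained by a local ("collared") recoding, obtain $\pi$ by forgetting decorations, read the generic $2{:}1$ off the label-preserving half-turn symmetry of $\mathscr A$ and the extra $2$ from the seam ambiguity over eventually-constant digit strings, and extract the cocycle $\phi$ from the $\mu$-to-$\mu$ transitions of $\mathscr A_\mu$. The only substantive variations are that you invoke recognizability of $\zeta$ where the paper proves bijectivity of the coding directly by a nested-clopen argument (taking care, via the collaring, of the four minimal and four maximal paths where injectivity is delicate), and that your "roles at two nested levels" parametrization of the states would need to be replaced by the paper's neighbour-collaring to land exactly on $\mathscr A$.
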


Denote by $\widehat L(\mathscr A)$ the space of bi-infinite paths in
$\mathscr A$, and write $w\sim w'$ for two bi-infinite paths in
$\mathscr A$ if there exists a bi-infinite path in $\mathscr A_\mu$
labeled $w|w'$. We construct a ``solenoid'' for the Thue-Morse shift:
a suspension of $\Omega$ on which the dynamics induced by $\zeta$
becomes invertible.
\begin{mainthm}[= Theorem~\ref{thm:solenoid}]
  The relation $\sim$ is an equivalence relation, and the quotient space
  $\mathfrak S\coloneqq\widehat L(\mathscr A)/{\sim}$
  \begin{enumerate}
  \item is compact, metrizable, connected;
  \item fibres over the circle $\R/\Z$ with fibre $L(\mathscr A)$;
    monodromy around the circle induces the shift action on the fibre;
  \item admits a quotient map to the $2$-adic solenoid
    $\mathbb S_2=(\Z_2\times\R)/(z,t+1)\sim(z+1,t)$, induced by
    forgetting alphabet decorations; fibres have cardinality $2$ or
    $4$. There is a homeomorphism
    \[\mathfrak S\longrightarrow\Z/2\cdot\bigg(\frac{\widetilde\Z_2\times[0,1]}{(z,1)\sim(z+1,0)}\bigg),\]
    on the image of which the action is given by ``addition with
    cocycle'', and on which the map
    $\mathfrak S\twoheadrightarrow\mathbb S_2$ is given by
    $\widetilde\Z_2\to\Z_2$ and $\Z/2\to1$.
  \end{enumerate}
\end{mainthm}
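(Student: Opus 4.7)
The plan is to build everything on Theorem A together with the transducer interpretation of $\mathscr{A}_\mu$: construct the explicit homeomorphism first, and then read off items (1)--(3) as consequences.

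\emph{Step 1 (the relation $\sim$).} I would first verify that $\sim$ is an equivalence relation. Reflexivity follows from the convention that a label $i_j$ on an inner edge of $\mathscr{A}$ is read as $i_j|i_j$ inside $\mathscr{A}_\mu$, so any bi-infinite path in $\mathscr{A}$ already witnesses $w\sim w$. Symmetry comes from an involution of $\mathscr{A}_\mu$ that swaps the two sides of every biletter; a direct inspection of the displayed edge list confirms this is an automaton automorphism, equivalently the transducer is invertible (as it must be, since it encodes a $\Z$-action). Transitivity requires running two $\mathscr{A}_\mu$-paths for $w|w'$ and $w'|w''$ in parallel and extracting a single path for $w|w''$ from the fibre product $\mathscr{A}_\mu\times_{\mathscr{A}}\mathscr{A}_\mu$; because the $\mu$-states encode the generators of a group (not merely a semigroup) of transducers, the resulting bi-infinite path can be reduced to an $\mathscr{A}_\mu$-path.

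\emph{Step 2 (topology of $\mathfrak{S}$).} The space $\widehat L(\mathscr{A})$ is a subshift of finite type over a finite alphabet, hence compact and metrizable. The graph of $\sim$ is the image of the compact $\widehat L(\mathscr{A}_\mu)$ under the continuous biletter-splitting map to $\widehat L(\mathscr{A})\times\widehat L(\mathscr{A})$, so it is closed. The quotient $\mathfrak{S}$ is therefore compact, Hausdorff, and second countable, hence metrizable.

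\emph{Step 3 (the explicit homeomorphism).} This is the crux. I would recover a bi-infinite $\mathscr{A}$-path from its positive half -- an element of $L(\mathscr{A})\cong\Omega\cong\Z/2\cdot\widetilde\Z_2$ via Theorem A -- together with a ``past-length'' parameter $t\in[0,1)$ recording how much of the past has been attached. Extending the past by one further letter corresponds to applying the shift-with-cocycle $(s,z)\mapsto(s+\phi(z+1),z+1)$ from Theorem A while advancing $t$ from just below $1$ to $0$; this is exactly the identification $(z,1)\sim(z+1,0)$ in the suspension, twisted by $\phi$ in the $\Z/2$ factor. The task is then to check that this assignment is well-defined modulo $\sim$ and exhaustive: an $\mathscr{A}_\mu$-equivalence of bi-infinite paths must correspond precisely to a translation of the past-length parameter, and conversely every $t\in[0,1)$ is realised.

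\emph{Step 4 (consequences).} The fibration over $\R/\Z$ is the projection to the $t$-coordinate, with fibre $\Z/2\cdot\widetilde\Z_2\cong L(\mathscr{A})$; by construction, monodromy around the circle is the shift-with-cocycle, hence the shift on $L(\mathscr{A})$. The quotient to $\mathbb{S}_2$ is induced by $\Z/2\to 1$ and $\widetilde\Z_2\to\Z_2$; the generic fibre size $2$ and the exceptional size $4$ on $\Z\subset\Z_2$ match precisely the counts for $\pi$ in Theorem A. For connectedness: the base $(\widetilde\Z_2\times[0,1])/{\sim}$ has path components that are real lines indexed by $\Z$-orbits in $\widetilde\Z_2$, and since $\Z$ is dense in $\Z_2$ each such line is dense, so the base is connected; the $\Z/2$-cover is connected because the cocycle $\phi$ is not a coboundary, which follows from minimality of the Thue-Morse shift (a clopen $\sim$-invariant half of $\mathfrak{S}$ would project to a clopen proper subset of $\Omega$).

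The main obstacle I anticipate is Step 3: making the past-length parameter $t$ canonically defined and showing the $\sim$-equivalence on bi-infinite paths matches translation in $t$ modulo the gluing. The cocycle $\phi$ and the duplication $\widetilde\Z_2\to\Z_2$ interact most delicately at the seam $t=1$, and one must carefully track how the two doubled points of $\widetilde\Z_2$ propagate through the suspension and through the $\Z/2$-twist.
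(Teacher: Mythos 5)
Your overall architecture --- closed equivalence relation, suspension of $(L(\mathscr A),\mu)$ over the circle, then Theorem~\ref{thm:homeo} to get the explicit model --- is the same as the paper's, and Steps 2 and 4 are essentially right. But Step 1 (transitivity) has a genuine gap, and it sits exactly where the real content lies. From witnesses $w|w'$ and $w'|w''$ you correctly form a bi-infinite path in the fibre product $\mathscr A_\mu\times_{\mathscr A}\mathscr A_\mu$, i.e.\ in an automaton computing $\mu^2$ (or $\mu\circ\mu^{-1}$, etc., depending on which part of the nucleus each witness uses). The assertion that this path ``can be reduced to an $\mathscr A_\mu$-path because the $\mu$-states encode generators of a group'' is not an argument: for a general (non-contracting) automatic $\Z$-action the asymptotic equivalence defined by the generators' transducers is \emph{not} transitive, group or no group. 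What is needed is nuclearity of $\mathscr N=\mathscr A\cup\mathscr A_\mu\cup\mathscr A_{\mu^{-1}}$: in the composed automaton $\mathscr A_{\mu^2}$ the states of type $\mu^{\pm2}$ have no incoming edges, so they are transient and a \emph{bi-infinite} path labeled $w|w''$ never visits them, hence already lies in $\mathscr N$. This is a finite verification on the automaton~\eqref{eq:mu2}, and it is the lemma your proof is missing. (Relatedly, your symmetry step assumes that swapping the two sides of every biletter is an automorphism of $\mathscr A_\mu$; it is not --- it produces the distinct automaton $\mathscr A_{\mu^{-1}}$, with states $\mu_{kj}$ in place of $\mu_{jk}$. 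The paper avoids both issues by defining $\sim$ via $\mathscr N$ rather than $\mathscr A_\mu$ alone, making reflexivity and symmetry immediate.)

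Two smaller points. In Step 3 the parameter $t$ is not a ``past-length'': a class of bi-infinite paths is determined by its positive half $q\in L(\mathscr A)$ together with the real number $t=\sum_{n<0}w_n2^n\in[0,1]$ read off the decoration-forgotten past (Lemma-Definition~\ref{ld:p} recovers the actual left-infinite path from these data), and the content of item (2) is that the $\sim$-identifications on left-infinite paths are \emph{exactly} the identifications of binary expansions of the same real number: paths in $\mathscr A_\mu$ ending at an inner state identify $1^{-\omega}0x$ with $0^{-\omega}1x$, and those ending at a state $\mu_{jk}$ identify $1^{-\omega}$ with $0^{-\omega}$, realizing the seam $(q,1)\sim(\mu(q),0)$. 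You flag Step 3 as the main obstacle; this computation is how the paper resolves it, after which the homeomorphism in (3) is just ``suspension of Theorem~\ref{thm:homeo}''. Your connectedness argument is fine and equivalent to the paper's (a suspension over a connected base with minimal monodromy is connected).
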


\subsection{Binimimal actions}
The following discussion arose during Dominik Franc\oe ur's PhD
defense~\cite{francoeur:phd}. Let $G$ be a group acting on a
topological space $\Omega$. Recall that the action is minimal if every
$G$-orbit in $\Omega$ is dense, namely if every point in $\Omega$ can
be taken into any open set by an element of $G$.

Let us call the action \emph{biminimal} if this definition can be made
symmetric: for every points with open neighbourhoods
$x\in\mathcal U\subseteq\Omega$ and $y\in\mathcal V\subseteq\Omega$,
there exists an element of $G$ taking $x$ into
$\mathcal V$ whose inverse takes $y$ into $\mathcal U$.

Define $\tau\colon X\times X\to X\times X$ by $\tau(x,y)=(y,x)$. In
case $G$ is abelian, we may let $G$ act on $X\times X$ by
$g\cdot(x,y)=(g(x),g^{-1}(y))$, and then the the action is biminimal
if $\tau$ may be approximated pointwise by the antidiagonal action of
$G$.

\begin{question}
  Is every minimal action biminimal?
\end{question}

An easy remark: if $(G,X)$ is minimal and $G$ acts on $X$ by
isometries then $(G,X)$ is biminimal. Indeed if for all $x,y\in X$ and
$r>0$ there is $g\in G$ with $g(x)\in B(y,r)$, and then
$g^{-1}(y)\in B(x,r)$.

Another easy remark, due to Volodymyr Nekrashevych : let $(G,X)$ be a
minimal action and fix $x,\mathcal U,\mathcal V$ as above. Define
$\mathcal V_0=\bigcup_{g\in G:g(x)\in\mathcal V}g(\mathcal
U)\cap\mathcal V$. Then $\mathcal V_0$ is open dense in $\mathcal V$:
given any $\mathcal W\text{ open in }\mathcal V$ there is $g\in G$
with $g(x)\in\mathcal W$ so $g(x)\in\mathcal V$ and
$\mathcal W\cap\mathcal V_0\neq\emptyset$. Therefore, if $X$ is a
Baire space then for every $x\in X$ the ``good'' $y$ in the definition
are generic (= comeagre).  However, we shall see:
\begin{mainthm}[= Theorem~\ref{thm:bimin}]\label{mainthm:bimin}
  The Thue-Morse dynamical system is minimal but not biminimal.
\end{mainthm}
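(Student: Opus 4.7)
Minimality of $(\Omega,\sigma)$ is classical because $\zeta$ is a primitive substitution; equivalently, it can be read off from Theorem~A, since $\Omega$ is a finite extension of the minimal odometer $(\Z_2,+1)$ and the Birkhoff sums of the cocycle $\phi$ from \eqref{eq:phi} achieve both values $\{0,1\}$ along arbitrarily $2$-adically small $n$.

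For non-biminimality I will work in the coordinate model $\Omega\cong\Z/2\cdot\widetilde\Z_2$ of Theorem~A, where $\sigma^n(s,z)=(s+c_n(z),z+n)$ with the Birkhoff sum $c_n(z):=\sum_{k=1}^n\phi(z+k)$, so that $\sigma^{-n}(s,z)=(s-c_n(z-n),z-n)$. Biminimality of a pair $(x,y)=((s_x,z_x),(s_y,z_y))$ along an integer $n$ is then equivalent to
\[
  z_x+n\approx z_y \text{ in }\widetilde\Z_2,\qquad c_n(z_x)\;=\;s_y-s_x\;=\;c_n(z_y-n)\text{ in }\Z/2.
\]
I will pick $x$ and $y$ in the four-element fiber $\pi^{-1}(0)\subset\Omega$, with $z_x,z_y\in\widetilde\Z_2$ the two distinct copies of $0\in\Z$. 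The $\widetilde\Z_2$-approximation $z_x+n\approx z_y$ forces $n$ to be $2$-adically small \emph{and} to swap the two copies of $0$ under addition. By inspection of \eqref{eq:phi} --- or equivalently by tracing $\mu$-paths in the automaton $\mathscr A_\mu$ --- this side-swap is rigidly correlated with the Birkhoff sum $c_n(z_x)$ taking a specific value. Choosing $s_x,s_y$ so that the required cocycle value $s_y-s_x$ conflicts with this correlation (the two cases $s_x=s_y$ and $s_x\ne s_y$ exhaust the possibilities, and one of them necessarily breaks the correlation) produces a pair $(x,y)$ for which the three displayed conditions are jointly unsatisfiable for any $n$.

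\textbf{The main obstacle} will be making the ``rigid correlation'' precise: one must check that as $n$ ranges over the integers realizing the $\widetilde\Z_2$-side-swap at $0$ (a 2-adically dense subset), the Birkhoff sum $c_n(0^+)$ is constrained to a fixed value, say $1$, while on the complementary ``non-swap'' integers it is forced to be $0$. This is essentially the statement that the duplication $\widetilde\Z_2\to\Z_2$ along $\Z$ and the extension $\Omega\to\widetilde\Z_2$ are governed by the \emph{same} combinatorial datum $\phi$; once verified by a short bookkeeping exercise in $\mathscr A_\mu$ (reading how the two digits of each output label $i_j$ co-vary along a $\mu$-path), it locks the Thue-Morse symbolic cocycle into exactly the asymmetry that obstructs the swap $\tau\colon(x,y)\mapsto(y,x)$ from being approximated by the antidiagonal $\Z$-action.
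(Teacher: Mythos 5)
Your reduction to the model $\Omega\cong\Z/2\cdot\widetilde\Z_2$ of Theorem~\ref{thm:homeo} is a reasonable strategy, and your choice of test points (the two copies of $0\in\Z$ inside $\widetilde\Z_2$) is exactly the right one; but the proof as planned has two genuine gaps. First, your displayed ``equivalent'' condition records only one approximation requirement in $\widetilde\Z_2$, namely $z_x+n\approx z_y$. Biminimality along $n$ demands both $\sigma^n(x)\in\mathcal V$ and $\sigma^{-n}(y)\in\mathcal U$, hence \emph{two} conditions, $z_x+n\in W_y$ and $z_y-n\in W_x$. In $\Z_2$ these would coincide, but in $\widetilde\Z_2$ they do not: a basic neighbourhood of $(0,t)$ consists of the points of $2$-adic valuation $\equiv t\pmod 2$ (and large), so with $z_x=(0,t_x)$, $z_y=(0,t_y)$ the first condition forces $v_2(n)\equiv t_y\pmod 2$ while the second forces $v_2(n)\equiv t_x\pmod 2$. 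When $t_x\neq t_y$ these are already incompatible for every $n$ --- that incompatibility \emph{is} the obstruction, it lives entirely in the $\widetilde\Z_2$ coordinate, and no cocycle computation is needed. (It is also precisely the paper's mechanism: there one computes the sets $\Lambda(x,s)=\{n:\mu^n(x)\in sL(\mathscr A)\}$ by automata on binary expansions and observes that $-\Lambda(x,d)$ consists of integers of odd $2$-adic valuation while $\Lambda(y,e)$ consists of integers of even valuation.)

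Second, the ``rigid correlation'' that you defer to a bookkeeping exercise is false, so the contradiction you actually plan to derive cannot be produced. By \eqref{eq:phi} one has $\phi(k)=v_2(k)+1\bmod 2$ for a nonzero integer $k$, whence $c_n((0,t))=\sum_{k=1}^n(v_2(k)+1)=2n-s_2(n)\equiv s_2(n)\pmod 2$ with $s_2$ the binary digit sum --- the Thue--Morse sequence itself. The parity of $s_2(n)$ is not determined by the parity of $v_2(n)$ (take $n=2$ and $n=6$: both have $v_2(n)=1$ but $s_2(n)=1$ and $2$), so the Birkhoff sum is \emph{not} constant on the ``side-swap'' integers; what the side-swap rigidly determines is only the single increment $\phi(z_x+n)$, not the sum $c_n(z_x)$. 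The repair is to drop the cocycle argument altogether and use the two-sided $\widetilde\Z_2$ condition above; once that is done your argument goes through and becomes an honest repackaging of the paper's proof, with the automaton computation of the $\Lambda$-sets replaced by an appeal to the topology of $\widetilde\Z_2$ and the equivariance statement of Theorem~\ref{thm:homeo}.
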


Presumably, a little more work could lead to a more general statement:
\begin{conj}
  Let $\sigma$ be a minimal homeomorphism of the Cantor set. Then
  either $\sigma$ is an isometry in a metric compatible with the
  Cantor set's topology, or $\sigma$ is not biminimal.
\end{conj}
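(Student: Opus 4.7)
The plan is to establish the contrapositive: if a minimal Cantor homeomorphism $\sigma$ is biminimal, then it is equicontinuous in some compatible metric (equivalently, by the classical characterisation of minimal equicontinuous Cantor systems, conjugate to a rotation on a compact monothetic abelian group, and a bi-invariant metric then makes $\sigma$ an isometry). Throughout I work with the maximal equicontinuous factor $\pi\colon(X,\sigma)\to(Y,T_\alpha)$, where $Y$ is a compact monothetic abelian group and $\sigma$ descends to translation by some $\alpha\in Y$. Equicontinuity of $\sigma$ is equivalent to $\pi$ being injective, so the goal is: starting from non-injectivity of $\pi$, produce a pair $(x,y)$ and disjoint clopen neighbourhoods $\mathcal U\ni x$, $\mathcal V\ni y$ such that no $n\in\Z$ satisfies both $\sigma^n(x)\in\mathcal V$ and $\sigma^{-n}(y)\in\mathcal U$.

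I would first reformulate the condition in the Ellis semigroup $E=E(X,\sigma)$. Picking $z_0\in Y$ with $|\pi^{-1}(z_0)|>1$ and distinct $x,y\in\pi^{-1}(z_0)$, any biminimality witness $n_\beta$ for $(x,y)$ must satisfy $n_\beta\alpha\to 0$; extracting a convergent subnet yields $p,q\in E$ with $\pi\circ p=\pi\circ q=\pi$, $p(x)=y$, $q(y)=x$, and $(p,q)$ in the closure of $\{(\sigma^n,\sigma^{-n})\}\subseteq E\times E$. Both $p,q$ lie in the fibre stabiliser $E_0=\{e\in E:\pi\circ e=\pi\}$, a compact subsemigroup of $E$; the strategy is to leverage the compatibility between $p$ and $q$ to force $E_0$ to be a topological group, hence (by Ellis's theorem identifying minimal systems with topological-group envelope as equicontinuous) trivial, contradicting non-injectivity of $\pi$.

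To make this effective I would follow the template of Theorem~\ref{mainthm:bimin}: use a Bratteli-Vershik model of $(X,\sigma)$ refining $\pi$ to obtain a decomposition $X\cong F\cdot\widetilde Y$ with $F$ a compact group indexing the fibres, $\widetilde Y\twoheadrightarrow Y$ a countable ``doubling'' absorbing the singularities of a section, and $\sigma$ acting by ``addition with cocycle'' $(s,z)\mapsto(s+\phi(z+1),z+1)$ for a $1$-cocycle $\phi\colon\widetilde Y\to F$, in direct analogy with $\Omega\cong\Z/2\cdot\widetilde\Z_2$. For witnesses $x=(0,z_0)$, $y=(f,z_0)$ with $f\neq 0$, biminimality demands a net $n_\beta$ with $n_\beta\alpha\to 0$ along which the forward and backward Birkhoff sums $\sum_{k=1}^{n_\beta}\phi(z_0+k)$ and $\sum_{k=0}^{n_\beta-1}\phi(z_0-k)$ simultaneously converge to symmetric non-trivial values in $F$. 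In the almost 1-1 regime (Sturmian-like, with generic fibres singletons) the two points of a singular fibre are left and right limits of the section, and simultaneous approach from both sides is geometrically incompatible: closeness of $\sigma^{n_\beta}(x)$ to $y$ forces approach of $n_\beta\alpha$ to $0$ from one side of $Y$, while closeness of $\sigma^{-n_\beta}(y)$ to $x$ forces approach from the opposite side. In higher-rank situations, such as Thue-Morse itself (fibres generically of size $2$), an Abel-summation / parity argument on $\phi$ gives the same incompatibility.

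The hard part will be uniformity. An arbitrary minimal Cantor system admits a Bratteli-Vershik representation refining $\pi$, but the attached cocycle is typically non-stationary and its target $F$ varies between levels of the diagram. Controlling the parity/sign obstruction uniformly across the tower, and verifying that it survives the inverse limit of cocycles, is the real work. The complementary route -- upgrading biminimality to joint continuity of composition in $E_0$ and then invoking Ellis's theorem directly -- is blocked by the generic failure of joint continuity in Ellis semigroups of non-equicontinuous systems. A successful proof will likely require a Furstenberg-style decomposition of $X\to Y$ as a tower of isometric extensions, with the cocycle obstruction tracked and propagated level by level; it is this step that explains why the result is only a conjecture.
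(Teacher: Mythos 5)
You should first note that the statement you are proving is labelled a \emph{Conjecture} in the paper: there is no proof to compare yours against. The paper offers only a one-sentence heuristic --- every minimal Cantor homeomorphism has a Bratteli--Vershik model, such models split into odometers (isometries) and expansive maps, and ``presumably'' the diagrams with more than one state carry disjoint clopens of starting states that obstruct biminimality. Your proposal is more structured than that heuristic, and you are candid that it is not a proof; so the honest verdict is that both texts stop short of an argument, and the comparison is between two strategies rather than two proofs.

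The concrete gap in your strategy, beyond the ``uniformity'' issue you flag yourself, is that it is anchored entirely to the maximal equicontinuous factor $\pi\colon(X,\sigma)\to(Y,T_\alpha)$. For a weakly mixing minimal Cantor system (these exist in abundance) $Y$ is a single point: the constraint $n_\beta\alpha\to 0$ is vacuous, there is no rotation over which to build a cocycle $\phi\colon\widetilde Y\to F$, the ``approach from opposite sides of $Y$'' picture has no content, and the Furstenberg-style tower of isometric extensions you invoke in the last step is trivial (and in any case such towers exhaust only the distal systems, not all minimal ones --- the general structure theorem requires weakly mixing extensions, which are precisely the ones your sign/parity obstruction cannot see). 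So the route you describe is tailored to the almost-automorphic / finite-to-one-over-an-odometer regime that Thue--Morse happens to inhabit, where it is indeed a sensible elaboration of Theorem~\ref{thm:bimin}: there the decomposition $\Omega\cong\Z/2\cdot\widetilde\Z_2$ with cocycle $\phi$ of~\eqref{eq:phi} is exactly the structure you postulate, and the paper's actual obstruction ($-\Lambda(x,d)\cap\Lambda(y,e)=\emptyset$, read off from the parity of the leading block of $0$'s in the binary expansion of $n$) is a rigorous instance of your parity argument. The Ellis-semigroup detour adds nothing: compactness of $E_0$ together with the existence of $p,q\in E_0$ with $p(x)=y$, $q(y)=x$ does not force $E_0$ to be a topological group, and you concede the joint-continuity step fails. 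A proof of the full conjecture would have to work with an arbitrary (non-stationary, infinite-rank) Bratteli--Vershik model directly, as the paper's heuristic suggests, rather than with the equicontinuous factor.
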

Indeed every minimal homeomorphism may be encoded by a
Bratteli-Vershik diagram, as we will see below; and Bratteli-Vershik
homeomorphisms are classified into odometers (which preserve a metric)
and expansive maps (which are thus subshifts). These correspond to
Bratteli diagrams / automata with $1$, respectively $>1$ states; and
presumably the diagrams with $>1$ states naturally come with disjoint
clopens corresponding to distinct starting states, violating the
biminimality.

\section{The Thue-Morse shift space}
Set $\Sigma_2=\{0,1\}$, and recall the Thue-Morse endomorphism $\zeta$
of $\Sigma_2^*$ given by $0\mapsto01,\;1\mapsto10$. Note that it extends
to $\Sigma_2^\N$ and $\Sigma_2^\Z$ by continuity.

Let $\Omega\subset\Sigma_2^\Z$ denote those $w\in\Sigma_2^\Z$ such
that all subwords of $w$ occur in $\zeta^n(0)$ for some $n$. It is a
compact, totally disconnected set naturally carrying a $\Z$-action. In
general, for a set $F$ and an invariant subspace $X$ of some $F^\N$ or
$F^\Z$, we denote by $\sigma$ or $\sigma_X$ the endomorphism of that
space given by $\sigma_X(u)_n=u_{n+1}$, and call the subspace a
\emph{subshift}.

Set $u=\zeta^\infty(0)=0110100110010110\dots\in\Sigma_2^\N$ a
right-infinite word, and for words $v,w\in\Sigma_2^\N$ denote by `$v.w$'
the word obtained by concatenating $v$ (in reverse) with $w$, namely
we have $(v.w)_n=w_n$ and $(v.w)_{-1-n}=v_n$ for all $n\in\N$; so
$\sigma$'s action amounts to moving the `$.$' one position to the
right. It is easy to see that we have
\[\Omega=\overline{\{\sigma^n(u.u):n\in\Z\}}.\]

It is well-known that $\Omega$ is \emph{aperiodic}, namely the
$\Z$-action is free. Moreover, there is no subword of $u$ of the form
$p q p q p$, as is shown by induction on the length of $p,q$. In fact,
a slightly larger group acts on $\Omega$: first, there is a central
involution $w\mapsto\overline w$ given by the exchange
$0\leftrightarrow 1$. Since the infinite dihedral group
$D_\infty\coloneqq\langle\beta,\gamma\mid \beta^2,\gamma^2\rangle$
acts on $\Z$ by $\beta(n)=-n$ and $\gamma(n)=-1-n$, it also acts on
$\Omega$, and the action is free except for two orbits, those of $u.u$
and $\overline{u.u}$ (both stabilized by $\gamma$); more on this later.

Recall that a dynamical system is \emph{minimal} if every orbit is
dense. It is well-known that our $\Omega$ is minimal, the criterion
being that $u.u$ is \emph{repetitive}: for every subword $v$ of $u.u$,
there is a constant $C(v)$ such that every subword of size $C(v)$ in $u.u$
contains a copy of $v$. This property in turn follows directly from
the nature of $\zeta$, with $C(v)$ growing linearly in $|v|$. Again,
more on this later.

Note furthermore that $\Omega$ admits another, non-invertible action
given by $\zeta$; really, we have an action of the ``Baumslag-Solitar
semigroup''
\[B(1,2)_+=\langle \alpha,\alpha^{-1},\zeta\mid
  \alpha^2\circ\zeta=\zeta\circ\alpha\rangle_+,
\]
with $\alpha$ acting as $\sigma$ and $\zeta$ acting as $\zeta$ on
$\Omega$. We could even include $D_\infty$ in our semigroup, and
consider
$\langle\beta,\gamma,\zeta\mid\beta^2=1,\gamma^2=1,\zeta\alpha=\alpha\zeta,\zeta\beta=\beta\alpha\beta\zeta\rangle_+$
if we wanted (we don't).

We note for future use the important property
$\Omega=\zeta(\Omega)\sqcup\sigma\zeta(\Omega)$: indeed $u$ does not
contain $000$ or $111$ as a subword, so the same holds for every
$w\in\Omega$. Thus $w$ contains a subword $01$, at either an even or
odd location. Depending on these cases, either $w$ or $\sigma(w)$ may
be factored into $01$ and $10$ subwords.

Remark that $\zeta^{-1}$ is a well-defined homeomorphism on
$\zeta(\Omega)$. We extend it to $\sigma\zeta(\Omega)$ by
$\zeta^{-1}(\sigma w)=\zeta^{-1}(w)$. We obtain a continuous, $2:1$
map $\zeta^{-1}$ on $\Omega$, satisfying $\zeta^{-1}\circ\zeta=1$ and
$\zeta\circ\zeta^{-1}(w)\in\{w,\sigma^{-1}(w)\}$ for all $w\in\Omega$.

\section{Bratteli diagrams}
Even though the $\Z$-action is very easy to understand on individual
elements of $\Omega$, it is not easy to study its dynamical
properties. For this, it is classical to re-encode $\Omega$ via
Bratteli diagrams.

\def\ozz{{}^10^0}
\def\zoz{{}^01^0}
\def\ooz{{}^11^0}
\def\zzo{{}^00^1}
\def\ozo{{}^10^1}
\def\zoo{{}^01^1}

We first extend the alphabet $\Sigma_2$ into
$\Sigma\coloneqq\{\ozz,\zoz,\ooz,\ozo,\zoo\}$, and extend the
substitution $\zeta$ to
\begin{equation}\label{eq:extendzeta}
  \begin{alignedat}{3}
  \ozz&\mapsto\zzo\;\zoz, &\qquad
  \zoz&\mapsto\ooz\;\ozz,&\qquad
  \ooz&\mapsto\zoz\;\ozz,\\
  \zzo&\mapsto\ozo\;\zoo,&
  \ozo&\mapsto\zzo\;\zoo,&
  \zoo&\mapsto\ooz\;\ozo.
\end{alignedat}
\end{equation}
What we have done is ``collared'' the original substitution; namely if
$w\in \Omega$ then we re\"encode it as $\widetilde w\in\Sigma^\Z$ by
$\widetilde w(n)={}^{w(n-1)}w(n)^{w(n+1)}$. The corresponding map
$\Omega\to\{\widetilde w:w\in \Omega\}\subset\Sigma^\Z$ is clearly a
homeomorphism on its image. Note that $\zeta$ naturally acts on
$\Sigma^\Z$, so $B(1,2)_+$ acts on $\Sigma^\Z$ preserving the image of
$\Omega$.

We finally encode $\Omega$ into $\Sigma^\N$ by
\[\lambda\colon\begin{cases}\Omega &\to\Sigma^\N,\\
    w &\mapsto((\zeta^{-n}(\widetilde w))(0))_{n\in\N}.
  \end{cases}
\]

The image of $\lambda$ is easy to understand using a graded graph
$\mathscr V^\N$ called a \emph{Bratteli diagram}, repeating
periodically in an infinite stack growing upwards the following
picture $\mathscr V$:
\[\begin{tikzpicture}
    \foreach\x/\y\z in {0/ozz,1/zoz,2/ooz,3/zzo,4/ozo,5/zoo} {
      \coordinate (\y-) at (2*\x,0);
      \coordinate (\y+) at (2*\x,2.5);
      \fill (\y+) circle[radius=2pt] node[above=1mm] {$\csname\y\endcsname$};
      \fill (\y-) circle[radius=2pt] node[below=1mm] {$\csname\y\endcsname$};
    };
    \def\0#1#2#3#4 {\draw (#2-) -- (#1+)
      (#3-) -- (#1+)
      pic[#4,draw,angle radius=6mm] {angle={#2-}--{#1+}--{#3-}};
    }
    \0{ozz}{zoz}{zzo}{<-}
    \0{zoz}{ozz}{ooz}{<-}
    \0{ooz}{ozz}{zoz}{<-}
    \0{zzo}{ozo}{zoo}{->}
    \0{ozo}{zzo}{zoo}{->}
    \0{zoo}{ooz}{ozo}{->}
  \end{tikzpicture}
\]
The diagram $\mathscr V$ is constructed as follows: every
`$a\mapsto bc$' in the extension of $\zeta$ to $\Sigma$ is
written as two edges in this diagram, going upwards from $b$ to $a$
and from $c$ to $a$, with an arrow from the first to the second. We
will use these arrows later; for now, let us call ``minimal edge'' the
source of the arrow, and ``maximal edge'' its range.

Let $P(\mathscr V)$ denote the space of all upwards-going paths in
$\mathscr V^\N$, recorded as sequences of vertices
$z=(z_0,z_1,\dots)$. The topology on $P(\mathscr V)$ declares as open
sets all
$\mathcal O_{(y_0,\dots,y_n)}=\{z\in P(\mathscr V):z_i=y_i\forall i\le
n\}$, and is homeomorphic to the Cantor set. (Note that we'll later
encode paths by their edges, but here it makes no difference.)

\begin{prop}
  The map $\lambda$ is a homeomorphism $\Omega\to P(\mathscr V)$.
\end{prop}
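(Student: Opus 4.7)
The plan is to verify three things: that $\lambda$ maps into $P(\mathscr V)$, that it is continuous, and that it is a bijection. Since $\Omega$ and $P(\mathscr V)$ are compact Hausdorff, a continuous bijection is automatically a homeomorphism.

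\emph{Well-definedness and continuity.} The extended substitution commutes with the collaring $\widetilde{(\cdot)}$, so $\widetilde{\zeta^{-n}(w)} = \zeta^{-n}(\widetilde w)$ for every $n$. The consecutive pair $z_n = (\zeta^{-n}\widetilde w)(0)$ and $z_{n+1} = (\zeta^{-n-1}\widetilde w)(0)$ therefore satisfies that $z_n$ is the first or second letter of the two-letter word $\zeta(z_{n+1})$, which is precisely the condition for an edge of $\mathscr V$. Continuity is routine: $z_n$ is determined by $w$ in a window of radius $O(2^n)$ around $0$.

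\emph{Bijectivity.} I would build an explicit inverse $\mu\colon P(\mathscr V)\to\Omega$ by level-by-level reconstruction. The vertex $z_0$ gives $\widetilde w(0)$, hence the values $w(-1), w(0), w(1)$. Each subsequent edge $z_n\to z_{n+1}$ is either minimal or maximal; this records whether position $0$ occupies the left or right slot of the $\zeta$-factorization at the next level, and hence pins down one new letter of $\widetilde w$ to the right of the known window (minimal) or to the left (maximal). Iterating, any path whose minimal and maximal edges both occur infinitely often determines $\widetilde w$ on all of $\Z$, uniquely, and one then checks that the result lies in $\Omega$ because each finite window of $w$ thus constructed is a factor of some $\zeta^n(0)$.

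\emph{Main obstacle: extremal paths.} The subtle case is paths that are eventually all-minimal or all-maximal, since then the reconstructed window only grows on one side. A direct inspection of $\mathscr V$ shows that there are exactly eight such extremal paths: four all-minimal, cycling inside $\{\zoz,\ooz\}$ or inside $\{\zzo,\ozo\}$, and four all-maximal, cycling inside $\{\zoz,\ozz\}$ or $\{\ozo,\zoo\}$. These eight paths correspond respectively to the eight special points $u.u$, $\overline u.u$, $u.\overline u$, $\overline u.\overline u$ together with their $\sigma^{-1}$-shifts, which form the complete list of $\zeta^{\pm 1}$-periodic orbits in $\Omega$. For each extremal path the ``undetermined'' half of $w$ is forced to be a shift of $u$ or of $\overline u$, selected by the starting vertex $z_0$. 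Verifying that this matching is exact — and hence both injective and surjective on the extremal paths — is the main technical step; once it is in hand the homeomorphism conclusion follows automatically.
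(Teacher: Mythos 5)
Your proposal is correct and follows essentially the same route as the paper: check well-definedness and continuity from the local definition of $\lambda$, reconstruct $w$ level by level from the path (the window growing right for minimal edges and left for maximal ones), and isolate the eventually-extremal paths as the only delicate case, identifying the four all-minimal and four all-maximal paths and matching them with the eight exceptional points exactly as the paper does. The only difference is cosmetic: where you phrase bijectivity as constructing an explicit inverse, the paper phrases it as nested nonempty clopen preimages having a singleton intersection.
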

\begin{proof}
  First, $\lambda(\Omega)\subseteq P(\mathscr V)$: indeed the diagram
  just says that if $\widetilde w$ contains a letter $b$ or $c$, then
  this letter must appear inside $\zeta(a)$ for some $a\in\Sigma$, so
  the sequence $\lambda(w)$ must follow a path in $\mathscr V^\N$.

  Clearly $\lambda$ is continuous, since the $n$th letter of
  $\lambda(w)$ only depends on a finite portion (of size $2^n$ around
  the origin) of $w$.

  Consider next a path $z=(z_0,z_1,\dots)\in P(\mathscr V)$; we wish
  to show that it has precisely one preimage under $\lambda$. Define
  $\epsilon_i$ for $i\in\N$ as follows: if the edge $z_i\to z_{i+1}$
  in $z$ is minimal, set $\epsilon_i=0$, otherwise $\epsilon_i=1$. The
  preimage of the clopen $\mathcal O_{(z_0,\dots,z_n)}$ under
  $\lambda$ is the set of sequences $w\in\Sigma_2^\Z$ that contain
  $\zeta^n(z_n)$ at positions
  $[-\sum_{i<n}\epsilon_i2^i,2^n-\sum_{i<n}\epsilon_i2^i[$, and in 
  particular is non-empty; thus the intersection of these clopens is
  non-empty and $\lambda$ is surjective.

  If furthermore the sequence $(\epsilon_0,\epsilon_1,\dots,)$ is not
  eventually constant, then the intervals above grow left and right
  with union $\Z$, so the intersection of the above clopens is a single
  point and $\lambda$ is injective.

  It remains to consider the case of $(\epsilon_i)$ eventually
  constant, and easily reduce to the case of constant
  $\epsilon_i$. For future use, call \emph{minimal}, respectively
  \emph{maximal}, a path $z=(z_0,z_1,\dots)$ in $\mathscr V^\N$, if
  all its edges are so. It is easy to see that there are four maximal
  and four minimal infinite paths: dashed=minimal, solid=maximal:
  \[\begin{tikzpicture}
      \foreach\x/\y\z in {0/ozz,1/zoz,2/ooz,3/zzo,4/ozo,5/zoo} {
        \coordinate (\y-) at (2*\x,0);
        \coordinate (\y+) at (2*\x,1.5);
        \fill (\y+) circle[radius=2pt] node[above=1mm] {$\csname\y\endcsname$};
        \fill (\y-) circle[radius=2pt] node[below=1mm] {$\csname\y\endcsname$};
      };
      \draw (ozz-) -- (zoz+) (zoz-) -- (ozz+) (zoo-) -- (ozo+) (ozo-) -- (zoo+);
      \draw[dashed] (zoz-) -- (ooz+) (ooz-) -- (zoz+) (ozo-) -- (zzo+) (zzo-) -- (ozo+);
    \end{tikzpicture}
  \]
  The four minimal paths encode sequences as follows:
  \begin{align*}
    (\zoz,\ooz,\zoz,\dots) &= \lambda(\zeta^\infty(0).\zeta^\infty(1)),\\
    (\ooz,\zoz,\ooz,\dots) &= \lambda(\zeta^\infty(1).\zeta^\infty(1)),\\
    (\zzo,\ozo,\zzo,\dots) &= \lambda(\zeta^\infty(0).\zeta^\infty(0)),\\
    (\ozo,\zzo,\ozo,\dots) &= \lambda(\zeta^\infty(1).\zeta^\infty(0)),
  \end{align*}
  and the four maximal paths encode the same sequences, shifted one
  step left. It is easy to see that, in this case too, the map
  $\lambda$ is injective. Note that we needed the ``collaring'' here:
  without it, there would be only two encodings of minimal, or
  maximal, paths.
\end{proof}

The action of the semigroup $B(1,2)_+$ on $\Omega$ can now be
transported via $\lambda$ to $P(\mathscr V)$. Let us describe on
$P(\mathscr V)$ the action of $\sigma\colon\Omega\righttoleftarrow$;
it will be an ``adic'' transformation $\mu$.

Clearly for every vertex in $\mathscr V^\N$ there is a unique minimal
path ending at that vertex, namely the one defined going downwards by
always following the minimal edge.

For the path $(z_0,z_1,\dots)$: let $n$ be smallest such that the edge
$(z_{n-1},z_n)$ is not maximal; let $(z'_{n-1},z_n)$ be the corresponding maximal edge. Then define
$\mu(z_0,z_1,\dots)=(z'_0,\dots,z'_{n-1},z_n,\dots)$ where
$(z'_0,\dots,z'_{n-1})$ is the minimal path ending in $z'_{n-1}$.

In case there is no such $n$, this means that $(z_0,z_1,\dots)$ is
maximal, and we have to extend $\mu$ appropriately. In fact, there is
a unique continuous extension, as a limit of
$\mu(z_0,\dots,z_n,z'_{n+1},\dots)$ for non-maximal edges
$(z_n,z'_{n+1})$. This may be seen as follows: the first $n-1$
vertices of $\mu(z_0,\dots,z_n,z'_{n+1},\dots)$ are independent of the
choices at positions $>n$, as the following picture shows:
\begin{equation}\label{eq:minmax}
  \begin{tikzpicture}[baseline=4cm]
    \foreach\x/\y\z in {0/ozz,1/zoz,2/ooz,3/zzo,4/ozo,5/zoo} {
      \coordinate (\y2) at (2*\x,2);
      \foreach\t in {3,...,6} {
        \coordinate (\y\t) at (2*\x,\t);
        \fill (\y\t) circle[radius=2pt];
      }
      \fill (\y2) circle[radius=2pt] node[below=1mm] {$\csname\y\endcsname$};
    };
    \draw (ozz2) -- (zoz3) -- (ozz4) -- (zoz5) -- (ozz6)
    (zoz5) -- (ooz6)
    (ozz4) -- (ooz5) -- (zoz6)
    (ooz5) -- (zoo6);
    \draw [dashed] (zzo2) -- (ozo3) -- (zzo4) -- (ozo5) -- (zzo6)
    (ozo5) -- (zoo6)
    (zzo4) -- (ozz5) -- (zoz6)
    (ozz5) -- (ooz6);
  \end{tikzpicture}
\end{equation}
Here the path $(\ozz\;\zoz)^\omega$ is maximal, and I drew solid paths
coinciding with it on its first two or three edges; every non-maximal
path has to bifurcate away from it as indicated. The corresponding
successors are drawn in dashed, and they all start by a corresponding
prefix of $(\zzo\;\ozo)^\omega$.

Note therefore that the action of $\mu$ is almost finitary: except
for the four maximal paths, the paths $s$ and $\mu(s)$ are cofinal,
namely coincide starting from some point on.

Note also that it is here that we need the ``collaring'': without
having decorated letters with their left and right neighbours, we
couldn't know which maximal path goes to which minimal one.

The map $\zeta\colon\Omega\righttoleftarrow$ is transported via
$\lambda$ to a section $\sigma^{-1}_0$ of the shift $\sigma$ on
$P(\mathscr V)$, and simply prepends to a path in $P(\mathscr V)$ the
unique minimal edge abutting to its start vertex.

\begin{prop}
  The map $\alpha\mapsto\mu,\zeta\mapsto\sigma^{-1}_0$ defines an
  action of $B(1,2)_+$ on $P(\mathscr V)$, and turns $\lambda$ into an
  equivariant homeomorphism.
\end{prop}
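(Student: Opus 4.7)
My plan is to reduce the proposition to proving the two intertwining identities
\[
\lambda\circ\sigma = \mu\circ\lambda, \qquad \lambda\circ\zeta = \sigma^{-1}_0\circ\lambda.
\]
Once these hold, the defining relation $\alpha^2\circ\zeta = \zeta\circ\alpha$ of $B(1,2)_+$ transports automatically across the bijection $\lambda$ from the trivial identity $\sigma^2\circ\zeta = \zeta\circ\sigma$ on $\Omega$ (a direct consequence of the fact that $\zeta$ doubles indices). So the assignment $\alpha\mapsto\mu$, $\zeta\mapsto\sigma^{-1}_0$ will extend to a well-defined $B(1,2)_+$-action on $P(\mathscr V)$ and $\lambda$ will be equivariant by construction.

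The identity $\lambda\circ\zeta = \sigma^{-1}_0\circ\lambda$ is essentially formal. Collaring commutes with $\zeta$, that is $\widetilde{\zeta w} = \zeta\widetilde w$, by the very design of the substitution~\eqref{eq:extendzeta}. Hence
\[
\lambda(\zeta w)_n = \zeta^{-n}(\widetilde{\zeta w})(0) = \zeta^{-(n-1)}(\widetilde w)(0) = \lambda(w)_{n-1} \qquad (n\ge 1),
\]
while $\lambda(\zeta w)_0 = \widetilde{\zeta w}(0)$ is the first letter of $\zeta(\widetilde w(0))$, which in the diagram $\mathscr V$ is precisely the source of the unique minimal edge abutting $\lambda(w)_0$. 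This is exactly the result of applying $\sigma^{-1}_0$ to $\lambda(w)$.

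The identity $\lambda\circ\sigma = \mu\circ\lambda$ is the real content. I would reuse the setup of the previous proof: for non-exceptional $w\in\Omega$ with $\lambda(w)=(z_0,z_1,\dots)$, form the bit sequence $(\epsilon_i)\in\{0,1\}^\N$ with $\epsilon_i=0$ or $1$ according as the edge $z_i\to z_{i+1}$ is minimal or maximal, and recall that $w$ is the unique word carrying $\zeta^n(z_n)$ at positions $[-\sum_{i<n}\epsilon_i 2^i,\,2^n-\sum_{i<n}\epsilon_i 2^i[$. Applying $\sigma$ moves the origin one step to the right, which increments the $2$-adic integer $\sum_i\epsilon_i 2^i$ by one with the usual carry. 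Translated back through $\lambda$, this arithmetic is exactly ``find the smallest $n$ for which $(z_{n-1},z_n)$ is not maximal, replace it by its maximal sibling $(z'_{n-1},z_n)$, and fill in below by the unique minimal path ending at $z'_{n-1}$'', which is the definition of $\mu$. Thus $\mu\lambda=\lambda\sigma$ holds on the complement of the four maximal paths; for the four exceptional paths, both sides are continuous on all of $\Omega$ (by the built-in continuous extension of $\mu$) and agree on a dense set, so they agree everywhere.

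The main obstacle I foresee is purely bookkeeping: one must match the orientation convention of $\epsilon_i$ with the sibling-swap direction in $\mu$, and check that the collaring is genuinely what lets us distinguish the two siblings $(z_{n-1},z_n)$ and $(z'_{n-1},z_n)$ at each level (without it, the uncollared vertex labels at one end of the edge coincide and $\mu$ would be multivalued, as was observed in the diagram of~\eqref{eq:minmax}). Once this matching is pinned down, both generator identities are in hand and the remainder of the proposition follows formally from bijectivity of $\lambda$.
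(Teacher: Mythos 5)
Your proposal is correct. The reduction to the two intertwining identities $\lambda\circ\sigma=\mu\circ\lambda$ and $\lambda\circ\zeta=\sigma^{-1}_0\circ\lambda$, after which the relation of $B(1,2)_+$ transports formally through the bijection $\lambda$ from $\sigma^2\circ\zeta=\zeta\circ\sigma$ on $\Omega$, is exactly the reduction the paper makes (it dismisses everything as ``immediate'' except the first identity, the second being asserted in the sentence preceding the proposition, which your computation via $\widetilde{\zeta w}=\zeta(\widetilde w)$ justifies). Where you genuinely differ is in the proof of $\lambda\circ\sigma=\mu\circ\lambda$. The paper argues recursively from the decomposition $\Omega=\zeta(\Omega)\sqcup\sigma\zeta(\Omega)$: if $w\in\zeta(\Omega)$ then $\lambda(w)$ starts with a minimal edge and $\mu$ merely toggles it to its maximal sibling, which is $\lambda(\sigma(w))$; if $w\in\sigma\zeta(\Omega)$ the first edge is maximal, $\mu$ resets it to minimal and the carry is delegated to $\mu(\sigma\lambda(w))$, so the argument recurses through $\zeta^{-1}$. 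You instead reuse the window bookkeeping from the homeomorphism proof: $\sigma$ increments the offset $\sum_i\epsilon_i2^i$ by one, and the odometer on offsets is precisely $\mu$ --- this uses, and you should say so explicitly, that every vertex of $\mathscr V$ has exactly one minimal and one maximal incoming edge, so a vertex $z_n$ together with an offset in $[0,2^n)$ determines the downward path uniquely. Both arguments encode the same carry propagation; yours is more explicitly arithmetic and makes the link with the $2$-adic odometer visible already at this stage, at the cost of treating the four maximal paths separately by density plus continuity of $\mu$, whereas the paper's recursive two-case formulation leaves that degenerate case to the continuity built into the definition of $\mu$. No gap; only the bookkeeping point you yourself flag needs to be pinned down, and it does work out.
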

\begin{proof}
  The claim is immediate, except perhaps for why
  $\sigma\colon\Omega\righttoleftarrow$ is transported to $\mu$. Now
  if $w\in\zeta(\Omega)$ then $\lambda(w)$ starts with a minimal edge
  and $\mu\lambda(w)$ coincides with $\lambda(w)$ except that its
  first edge is now maximal; thus
  $\lambda^{-1}\mu\lambda(w)\in\sigma\zeta(\Omega)$, and is in fact
  $\sigma(w)$, so $\mu\circ\lambda=\lambda\circ\sigma$ in that
  case. If $w\in\sigma\zeta(\Omega)$ then $\sigma(w)\in\zeta(\Omega)$
  and $\zeta^{-1}\sigma(w)=\sigma\zeta^{-1}(w)$; and similarly
  $\lambda(w)$ starts with a maximal edge and $\mu\lambda(w)=z_0 z'$
  with $z_0$ minimal and $z'=\mu(\sigma\lambda(w))$, as required.
\end{proof}

Note that we may easily define an inverse of $\lambda$ on
$P(\mathscr V)$ using $\mu$: we have
\[\lambda^{-1}(z)=(\mu^n(z)_0)_{n\in\Z}.\]

\section{Automatic actions and $\omega$-regular languages}
We rephrase the previous section in the more flexible language of
automata.  We first recall the notion of $\omega$-regular
languages\footnote{In this section we switch from the notation
  $\Sigma^\N$ to $\Sigma^\omega$ out of deference for this standard
  terminology. We will switch back to $\N$ when we embed $\Sigma^\N$
  into $\Sigma^\Z$ in the next section.}. We fix once and for all a
finite alphabet $\Sigma$. An \emph{$\omega$-automaton} is the data of
a finite directed graph $\mathscr A$, two subsets $A_*,A_\dagger$ of
its vertex set called \emph{initial} and \emph{final} states, and a
labelling of edges of $\mathscr A$ by $\Sigma$. The
\emph{$\omega$-language} that it recognizes is the following subset
$L(\mathscr A)$ of the set $\Sigma^\omega$ of right-infinite words
over $\Sigma$: it consists of those $w\in\Sigma^\omega$ for which
there exists a path in $\mathscr A$ labeled $w$, starting at a vertex
in $A_*$, and passing infinitely many times through vertices in
$A_\dagger$. A subset $L\subseteq\Sigma^\omega$ that can be recognized
by an $\omega$-automaton is called an \emph{$\omega$-regular
  language}.

For example, $P(\mathscr V)$ is recognized by the following
$\omega$-automaton in which all states are initial and final; it is
obtained by identifying the top and bottom rows in $\mathscr V$:
\[\begin{fsa}
    \node[state] (ozz) at (-1,1) {$\ozz$};
    \node[state] (zzo) at (1,1) {$\zzo$};
    \node[state] (ooz) at (-1,-1) {$\ooz$};
    \node[state] (zoo) at (1,-1) {$\zoo$};
    \node[state] (zoz) at (-2.732,0) {$\zoz$};
    \node[state] (ozo) at (2.732,0) {$\ozo$};
    \foreach\i/\j in {ozz/ooz,ooz/zoo,zoo/zzo,zzo/ozz} {
      \draw[->] (\i) -- node {\small $\csname\i\endcsname$} (\j);
    };
    \foreach\i/\j in {ozz/zoz,zoz/ozz,zoz/ooz,ooz/zoz,zzo/ozo,ozo/zzo,ozo/zoo,zoo/ozo} {
      \draw[->,decorate,decoration={single line,raise=2pt}] (\i) -- node {\small $\csname\i\endcsname$} (\j);
    };
  \end{fsa}
\]
Note that each edge's label is simply the label of its source vertex.

\begin{defn}
  Let $L$ be an $\omega$-regular language, and let a (semi)group $G$ acting
  on $L\subseteq\Sigma^\omega$. The action is called \emph{regular} if
  for every $g\in G$ its graph
  \[\{(w,g(w)):w\in L\}\subseteq L\times L
  \]
  is a regular language in
  $\Sigma^\omega\times\Sigma^\omega=(\Sigma\times\Sigma)^\omega$.
\end{defn}

By classical properties of $\omega$-regular languages, it is
sufficient to check that the graphs of generators are regular.

We shall see that the action of $\mu$ on $P(\mathscr V)$ is regular. However, before doing so, we change once more the notation: first, we write $L(\mathscr A)$ instead of $P(\mathscr V)$, since we are about to forget about the Bratteli diagram. We change our alphabet to $\{i_j:i\in\{0,1\},j\in\{a,b,c,d,e,f\}\}$ the set of edges of $\mathscr A$, as follows: we rename vertices as
\[\ozz=a,\quad\zoz=b,\quad\ooz=c,\quad \zzo=d,\quad\ozo=e,\quad\zoo=f\]
so as to avoid multiple subscripts, and label the minimal edge ending at vertex $j$ as $0_j$ and the maximal one as $1_j$. We thus get
\[\begin{fsa}[every state/.style={minimum size=7mm}]
    \node at (-4,0) {\large $\mathscr A$:};
    \node[state] (ozz) at (-1,1) {$a$};
    \node[state] (zzo) at (1,1) {$d$};
    \node[state] (ooz) at (-1,-1) {$c$};
    \node[state] (zoo) at (1,-1) {$f$};
    \node[state] (zoz) at (-2.732,0) {$b$};
    \node[state] (ozo) at (2.732,0) {$e$};
    \foreach\i/\j/\l in {ozz/ooz/1_c,ooz/zoo/0_f,zoo/zzo/1_d,zzo/ozz/0_a} {
      \draw[->] (\i) -- node {\small $\l$} (\j);
    };
    \foreach\i/\j\l in {ozz/zoz/1_b,zoz/ozz/1_a,zoz/ooz/0_c,ooz/zoz/0_b,zzo/ozo/0_e,ozo/zzo/0_d,ozo/zoo/1_f,zoo/ozo/1_e} {
      \draw[->,decorate,decoration={single line,raise=2pt}] (\i) -- node {\small $\l$} (\j);
    };
  \end{fsa}
\]
\begin{lem}
  The $\omega$-language accepted by $\mathscr A$ is $P(\mathscr V)$.
\end{lem}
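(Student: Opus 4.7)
The plan is to verify that $\mathscr A$ is, by construction, nothing but the Bratteli diagram $\mathscr V$ with its top and bottom rows collapsed into one, and that the edge labels $i_j$ encode edges faithfully. With this, the lemma reduces to unwinding definitions, once we agree (as the earlier parenthetical remark allows) to encode a path in $P(\mathscr V)$ by its sequence of edge labels rather than by its vertex sequence.

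First I would observe that $\mathscr V^\N$ is the infinite stack of copies of $\mathscr V$, so an upward-going path $(z_0,z_1,\dots)$ in $\mathscr V^\N$ is determined by, and determines, its sequence of traversed edges $z_n\to z_{n+1}$. Identifying the two horizontal rows of $\mathscr V$ into $\{a,\dots,f\}$ produces precisely the directed graph underlying $\mathscr A$, and infinite walks in $\mathscr A$ are thereby in tautological bijection with elements of $P(\mathscr V)$. Since every state of $\mathscr A$ is both initial and final, every such walk is accepted, so $L(\mathscr A)$ coincides with the set of label sequences of infinite walks.

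Next I would verify that reading edge labels gives an injection $P(\mathscr V)\hookrightarrow\Sigma^\omega$ whose image is $L(\mathscr A)$. By construction, the label $i_j$ of an edge records its target vertex ($j$) and whether the edge is minimal ($i=0$) or maximal ($i=1$); since each vertex of $\mathscr V$ has exactly one minimal and one maximal incoming edge (immediate by inspection of the diagram of $\mathscr V$), the labeling is a bijection between the edge set of $\mathscr A$ and the alphabet $\Sigma$. Hence the bijection between paths in $P(\mathscr V)$ and walks in $\mathscr A$ descends, via edge labels, to a bijection $P(\mathscr V)\to L(\mathscr A)$, which is the asserted identity.

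I do not anticipate any serious obstacle: the statement is essentially a tautology, once one disentangles the two encodings of a path. The only thing worth being careful about is the bookkeeping — that in the label $i_j$, the subscript $j$ is the \emph{target} rather than source of the edge, and that minimality/maximality refers to the arrows drawn on the pictures of $\mathscr V$ and $\mathscr A$.
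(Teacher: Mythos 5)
Your proposal is correct and follows essentially the same route as the paper's (two-sentence) proof: identify states of $\mathscr A$ with vertices of $\mathscr V$, match edges to transitions, and note that the label $i_j$ (target vertex plus minimal/maximal bit) determines the edge uniquely, so reading labels along walks reproduces $P(\mathscr V)$. Your extra care about the edge-versus-vertex encoding of paths and the fact that all states are initial and final is harmless bookkeeping that the paper leaves implicit.
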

\begin{proof}
  The translation is direct: the states of $\mathscr A$ are in
  bijection with the vertices of $\mathscr V$, and every edge `$v\to w$'
  in $\mathscr V$ gives a transition in $\mathscr A$ from $v$ to $w$,
  labeled `$0_w$' or `$1_w$' according to whether the edge is minimal
  or maximal.
\end{proof}

Let us now construct an automaton recognizing the action of $\mu$. We
shall in fact add to $\mathscr A$ new states $\mu_{jk}$ for all states
$j,k$ of $\mathscr A$, representing the action of $\mu$ on paths
starting at $j$ when their image starts at $k$. The labels of the
edges are written `$e|f$' rather than $(e,f)$ for $e,f$ in our
alphabet $\{i_j\}$; and even `$e$' rather than $(e,e)$.

Our new automaton will in fact contain the previous one; choosing as
initial states all $j\in\{a,b,c,d,e,f\}$ yields $L(\mathscr A)$, and
in other words the action of the identity if we identify
$L(\mathscr A)$ with its diagonal in
$L(\mathscr A)\times L(\mathscr A)$. Choosing as initial states all
states labeled $\mu_{jk}$ yields the action of $\mu$. All states in
$\mathscr A_\mu$ are final. We recall:
\begin{prop}[e.g.~\cite{perrin-pin:iw}*{Proposition~3.7}]\label{prop:closed}
  An $\omega$-regular language $L$ is closed in $\Sigma^\omega$ if and
  only $L=L(\mathscr A)$ for an automaton $\mathscr A$ in which all
  states are final.
\end{prop}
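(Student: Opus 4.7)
The plan is to translate closedness of $L$ into a combinatorial condition on the finite prefixes of $L$, then realise that condition as the structure of an $\omega$-automaton with all states final. The key topological fact I will use is that for any $L\subseteq\Sigma^\omega$, the closure $\overline L$ consists of precisely those $w$ every finite prefix of which belongs to $\operatorname{Pref}(L)=\{u\in\Sigma^*:uv\in L\text{ for some }v\in\Sigma^\omega\}$; this is immediate since the cylinders $u\Sigma^\omega$ form a neighbourhood basis at $w$.

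For the direction $(\Leftarrow)$, suppose $L=L(\mathscr A)$ with every state of $\mathscr A$ final. The B\"uchi acceptance condition then becomes vacuous, so $w\in L$ is equivalent to the existence of \emph{some} infinite path in $\mathscr A$ from an initial state carrying label $w$. If $w\notin L$, the tree of finite runs from initial states whose label is a prefix of $w$ is finitely branching (as $\mathscr A$ is a finite graph) and has no infinite branch; K\"onig's lemma then forces some prefix $u$ of $w$ to admit no run at all, and the cylinder $u\Sigma^\omega$ is a neighbourhood of $w$ disjoint from $L$. Hence $L$ is closed.

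For the direction $(\Rightarrow)$, I would first show that $\operatorname{Pref}(L)$ is a regular language in $\Sigma^*$: given any $\omega$-automaton $\mathscr B$ for $L$, call a state of $\mathscr B$ \emph{live} if some accepting run starts there. The live states are computable by a standard fixpoint / lasso argument, and $\operatorname{Pref}(L)$ is recognised by the finite automaton obtained from $\mathscr B$ by marking the live states as accepting. Pick now a deterministic automaton $(Q,\delta,q_0,F)$ for $\operatorname{Pref}(L)$; assuming $L\neq\emptyset$ we have $q_0\in F$ (the case $L=\emptyset$ is handled by an $\mathscr A$ with no initial states). Define $\mathscr A$ with vertex set $F$, sole initial state $q_0$, every state final, and an edge $p\xrightarrow{a}q$ precisely when $\delta(p,a)=q\in F$. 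By construction, an infinite path in $\mathscr A$ from $q_0$ labeled $w$ exists if and only if reading $w$ in the deterministic automaton never leaves $F$, if and only if every prefix of $w$ lies in $\operatorname{Pref}(L)$, if and only if $w\in\overline L=L$.

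The genuine obstacle is proving that $\operatorname{Pref}(L)$ is regular; this is the only point where $\omega$-regularity of $L$ is actually used, and it requires either invoking a standard closure/decidability property of B\"uchi automata or explicitly running the live-states computation on $\mathscr B$. Everything else is bookkeeping around the slogan ``all prefixes remain feasible'', which is exactly the property that an all-states-final $\omega$-automaton is equipped to express.
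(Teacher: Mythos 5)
The paper offers no proof of this proposition at all --- it is quoted as a known result from Perrin--Pin --- so there is nothing internal to compare against; your argument has to stand on its own, and it does. Both directions are sound: the K\"onig's-lemma argument correctly shows that an all-states-final B\"uchi automaton (whose acceptance condition is indeed vacuous) recognizes a closed set, and the converse correctly reduces to the identity $L=\overline L=\{w:\text{every prefix of }w\text{ lies in }\operatorname{Pref}(L)\}$ together with regularity of $\operatorname{Pref}(L)$, which your ``live states'' construction establishes (note that for the proof you only need the NFA with live states marked accepting to \emph{exist}, not that liveness be decidable). This is essentially the classical argument that a closed $\omega$-regular language is the adherence of its prefix language. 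Two small remarks: the determinization step can be skipped, since the sub-NFA of $\mathscr B$ on live states reachable from initial states, with all states final, already recognizes $\overline L$ (any run of a prefix ending in a live state passes only through live states, so K\"onig's lemma applies within that subautomaton); and in the deterministic version it is the prefix-closedness of $\operatorname{Pref}(L)$ that guarantees the run of $w$ stays in $F$ exactly when all its prefixes are accepted, which you use implicitly and correctly.
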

Thus we recover that $L(\mathscr A)$ is compact, and the
transformation defined by $\mathscr A_\mu$ is continuous.

The information in the Bratteli diagram can be translated as follows:
if there are minimal and maximal edges $j\to\ell$ and $k\to\ell$
respectively in the Bratteli diagram, then the automaton has a
transition from $\mu_{jk}$ to $\ell$ labeled $0_\ell\to1_\ell$. It
also has some transitions from $\mu_{k m}$ to $\mu_{\ell n}$, but
these are not entirely prescribed by the Bratteli diagram (remember
the required argument about continuity!). In fact, to determine such
edges, we must consider in $\mathscr A$ all possible continuations of
the path starting with the edge $1_k$, once it reaches an edge of the form $0_p$
replace it by $1_p$, and follow backwards the $0_q$-edges. Thus for
example there is a path $1_a1_b\dots 1_a1_b0_c$, which goes under
$\mu$ to $0_d0_e\dots0_d0_a1_c$, so there is an edge from $\mu_{be}$
to $\mu_{ad}$ labeled $1_a|0_d$. Writing labels `$i_j$' for `$i_j|i_j$' to highlight the previous automaton as a subautomaton, we get
\[\begin{fsa}[every state/.style={minimum size=7mm,text width=7mm,inner sep=0mm,align=center}]
    \node at (-5,1.6) {\large $\mathscr A_\mu$:};
    \node[state] (a) at (1,2) {$a$};
    \node[state] (d) at (1,0) {$d$};
    \node[state] (c) at (-1,0) {$c$};
    \node[state] (f) at (-1,-2) {$f$};
    \node[state] (b) at (-1,2) {$b$};
    \node[state] (e) at (1,-2) {$e$};
    \node[state] (muca) at (-3,2) {$\mu_{ca}$};
    \node[state] (muba) at (-3,0) {$\mu_{ba}$};
    \node[state] (muce) at (-3,-2) {$\mu_{ce}$};
    \node[state] (muad) at (-5,0) {$\mu_{ad}$};
    \node[state] (mube) at (-5,-2) {$\mu_{be}$};
    \node[state] (mudf) at (3,-2) {$\mu_{df}$};
    \node[state] (muef) at (3,0) {$\mu_{ef}$};
    \node[state] (mudb) at (3,2) {$\mu_{db}$};
    \node[state] (mufc) at (5,0) {$\mu_{fc}$};
    \node[state] (mueb) at (5,2) {$\mu_{eb}$};
    \foreach\i/\j/\l in {a/c/1_c,c/f/0_f,f/d/1_d,d/a/0_a,muca/b/0_b|1_b,muba/c/0_c|1_c,muce/f/0_f|1_f,mudf/e/0_e|1_e,muef/d/0_d|1_d,mudb/a/0_a|1_a,muad/muca/1_c|0_a,muad/muba/1_b|0_a,muad/muce/1_c|0_e,mufc/mudb/1_d|0_b,mufc/muef/1_e|0_f,mufc/mudf/1_d|0_f} {
      \draw[->] (\i) -- node [inner sep=1pt] {\small $\l$} (\j);
    };
    \foreach\i/\j\l in {a/b/1_b,b/a/1_a,b/c/0_c,c/b/0_b,d/e/0_e,e/d/0_d,e/f/1_f,f/e/1_e} {
      \draw[->,decorate,decoration={single line,raise=2pt}] (\i) -- node {\small $\l$} (\j);
    };
    \foreach\i/\j\l in {muad/mube/1_b|0_e,mueb/mufc/1_f|0_c,mube/muad/1_a|0_d,mufc/mueb/1_e|0_b} {
      \draw[->,decorate,decoration={single line,raise=2pt}] (\i) -- node [sloped,midway,above=1pt] {\small $\l$} (\j);
    };    
  \end{fsa}
\]
The initial states are all those labeled `$\mu_{jk}$', and all states
are final. Note that the transducer describing $\mu$ is ``bounded'' in
the following sense: for every $n\in\N$, there is a bounded number (at
most $12$) paths of length $n$ that do not reach an identity state
($a,\dots,f$).

!!Also for economy, we omit from the automaton all states that are not
\emph{accessible} (cannot be reached from an initial state) or not
\emph{co-accessible} (cannot be followed by a path traversing final
states infinitely often). Automata can be \emph{minimized} by
furthermore identifying indistinguishable states. The minimal
automaton associated with an $\omega$-regular language is unique.

Of course the automaton describing $\mu^{-1}$ is obtained by changing
the labels `$e|f$' to `$f|e$'. Automata for $\mu^n$, with arbitrary
$n\in\Z$, may be obtained by composing the transducers in the usual
way.
\begin{lem}
  The automaton $\mathscr A_\mu$ defines the homeomorphism $\mu$ on
  $L(\mathscr A)$.
\end{lem}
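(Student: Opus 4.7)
The task is to verify that the transducer $\mathscr A_\mu$, read from any of its $\mu_{jk}$-initial states, has for every input $w\in L(\mathscr A)$ a unique accepting run, whose output equals the adic image $\mu(w)$ defined in the previous section. Since $\mu$ was already shown to be a homeomorphism via transport of $\sigma$ through $\lambda$, this will establish the lemma.

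The first step is to show that $\mathscr A_\mu$ is \emph{unambiguous}: for every $w\in L(\mathscr A)$ there is exactly one accepting run reading $w$. The transducer is not input-deterministic -- for example $\mu_{ad}$ has two transitions with input label $1_c$, one to $\mu_{ca}$ and one to $\mu_{ce}$ -- but these branches become distinguishable after one further letter, because $\mu_{ca}$ admits only the continuation $0_b$ and $\mu_{ce}$ only $0_f$. More structurally, the four ``carry-propagating'' $\mu$-states $\mu_{ad},\mu_{be},\mu_{fc},\mu_{eb}$ form two $2$-cycles on maximal-edge inputs, while each ``entering'' $\mu$-state ($\mu_{ca},\mu_{ba},\mu_{ce},\mu_{df},\mu_{ef},\mu_{db}$) has a unique outgoing edge, landing in an inner state with a non-maximal input label. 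One reads off that every $w$ picks out a unique initial $\mu_{jk}$ and a unique run that traces the maximal initial segment of $w$, transitions into the inner subautomaton on the first non-maximal edge, and then consumes the tail deterministically.

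The second step is to match the output edge-by-edge with the adic rule. Recall that rule: given $w=(z_0,z_1,\dots)$ with maximal initial segment $(z_0,\dots,z_{n-1})$ and first non-maximal edge $(z_{n-1},z_n)$, one writes the minimal path $(z'_0,\dots,z'_{n-1})$ ending at the unique $z'_{n-1}$ such that $(z'_{n-1},z_n)$ is maximal, then swaps the first non-maximal edge for its maximal counterpart and leaves the tail. A carry transition $\mu_{jk}\xrightarrow{1_{j'}|0_{k'}}\mu_{j'k'}$ implements precisely the step ``consume the maximal edge $j\to j'$ and emit the minimal edge $k\to k'$ with matching Bratteli predecessors''; an entering transition $\mu_{jk}\xrightarrow{0_{j'}|1_{j'}}j'$ implements the final swap, with the source vertex $k$ of the emitted maximal edge correctly matching the start of the minimal path traced so far; and once the run enters an inner state, every subsequent label is of the form $i_p|i_p$, so the tail is copied verbatim. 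Checking that the assignments of $(j,k)\mapsto(j',k')$ recorded by the $16$ transitions of $\mathscr A_\mu$ correctly track the Bratteli predecessors is a direct case analysis through the six substitutions of \eqref{eq:extendzeta}.

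Finally, the four infinite maximal paths are limits of non-maximal ones, and by Proposition~\ref{prop:closed} the relation $\{(w,w'):(w,w')$ accepted by $\mathscr A_\mu\}$ is closed in $L(\mathscr A)\times L(\mathscr A)$, so the function $\mu'$ it defines is continuous; $\mu$ is also continuous, its extension over the maximal paths being forced by~\eqref{eq:minmax}. Agreement on the dense set of non-maximal paths then gives $\mu'=\mu$ everywhere. The main obstacle is the bookkeeping of Step~2: although each check is elementary, getting the pairing $(j,k)$ in $\mu_{jk}$ correct for all ten $\mu$-states requires walking through the transducer diagram carefully and cross-referencing it against the minimal/maximal edges of the Bratteli diagram $\mathscr V$.
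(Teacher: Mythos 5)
Your proposal is correct and follows essentially the same route as the paper's (much terser) proof: check that each input $w\in L(\mathscr A)$ admits a unique accepting run, verify edge-by-edge that the transducer implements ``while reading a maximal edge, print the minimal one determined by the Bratteli predecessors'' as in~\eqref{eq:minmax}, and handle the four maximal paths by closedness/continuity via Proposition~\ref{prop:closed}. The only organizational difference is that the paper additionally checks that the output projection unambiguously minimizes to $\mathscr A$ to see directly that the relation is a homeomorphism, whereas you obtain this by identifying the relation with the graph of the already-known homeomorphism $\mu$; both are valid.
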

\begin{proof}
  First, let us check that the relation defined by $\mathscr A_\mu$ is
  a homeomorphism. For this, just keep the input labels on each edge
  of $\mathscr A_\mu$, and note that the resulting automaton
  unambiguously minimizes to $\mathscr A$; and similarly when only
  keeping the output labels.

  Next, note that the automaton really does the following: while a
  maximal edge is read, print a minimal one and repeat. The minimal
  edge to be printed follows from the computation in
  diagram~\eqref{eq:minmax}.
\end{proof}

Note that transducers may also describe non-invertible
transformations, and even relations. For example, the map $\zeta$,
which prepends to a path its minimal edge, is defined by the following
automaton $\mathscr A_\zeta$ (changing the initial states leads to
$\sigma\zeta$, the map prepending to each path its maximal edge; and
switching input and output leads to the shift map $\sigma$ on paths);
so the action of the semigroup $B(1,2)_+$ on $L(\mathscr A)$ is
automatic. The automaton has stateset the alphabet, and on each
transition reads a letter, printing the previously-stored one; so if
$\mathscr A$ has a transition from $i$ to $j$ labeled `$z_j$' then
$\mathscr A_\zeta$ has for all $y\in\{0,1\}$ a transition from $y_i$
to $z_j$ labeled `$z_j|y_i$':
\begin{equation}\label{eq:zeta}
  \hbox to 100mm{\kern-1cm\begin{fsa}[every state/.style={minimum size=7mm,text width=7mm,inner sep=0mm,align=center},baseline=2cm]
    \node at (-4,2) {\large $\mathscr A_\zeta$:};
    \node[state] (a1) at (-1,1) {};
    \node[state] (d1) at (1,1) {};
    \node[state] (c1) at (-1,-1) {};
    \node[state] (f1) at (1,-1) {};
    \node[state] (b1) at (-2.732,0) {};
    \node[state] (e1) at (2.732,0) {};
    \node[state] (a0) at (-2.5,2.5) {$\zeta_a$};
    \node[state] (d0) at (2.5,2.5) {$\zeta_d$};
    \node[state] (c0) at (-2.5,-2.5) {$\zeta_c$};
    \node[state] (f0) at (2.5,-2.5) {$\zeta_f$};
    \node[state] (b0) at (-5,0.3) {$\zeta_b$};
    \node[state] (e0) at (5,-0.3) {$\zeta_e$};
    \foreach\i/\j/\k/\l in {c/f/0/0,d/a/0/0,c/f/1/0,d/a/1/0,b/a/0/1,b/c/1/0,c/b/1/0,d/e/1/0,e/d/1/0,e/f/0/1} {
      \draw[->] (\i\k) -- node[midway,sloped] {\small $\l_\j|\k_\i$} (\j\l);
    };
    \foreach\i/\j/\k/\l/\s in {a/b/0/1/{(-2mm,5mm)},f/e/0/1/{(2mm,-5mm)},a/c/1/1/{(-5mm,2mm)},f/d/1/1/{(4mm,2mm)},a/c/0/1/{(-12mm,0mm)},f/d/0/1/{(12mm,0mm)}} {
      \draw[->] (\i\k) -- node[midway,sloped,shift=\s] {\small $\l_\j|\k_\i$} (\j\l);
    };
    \foreach\i/\j\l/\s in {a/b/1/{(-7mm,0mm)},b/a/1/{(1mm,0mm)},b/c/0/{(0mm,0mm)},c/b/0/{(0mm,0mm)},d/e/0/{(0mm,0mm)},e/d/0/{(0mm,0mm)},e/f/1/{(-1mm,0mm)},f/e/1/{(7mm,0mm)}} {
      \draw[->,decorate,decoration={single line,raise=2pt}] (\i\l) -- node[midway,sloped,shift=\s] {\small $\l_\j|\l_\i$} (\j\l);
    };
  \end{fsa}}
\end{equation}

It is instructive to compare the above automaton with the adic
transformation given in~\cite{vershik-solomyak:adic}*{Equation~(2)}:
in our language, it is given by the transducer
\begin{equation}\label{eq:M}
  \begin{fsa}[every state/.style={minimum size=7mm,text width=7mm,inner sep=0mm,align=center},baseline=0mm]
    \node[state] (1) at (0,0) {$E$};
    \node[state] (01e) at (4,0) {$M$};
    \node[state] (01o) at (-2,0) {$M$};
    \node[state] (10e) at (2,0) {$M$};
    \node[state] (10o) at (-4,0) {$M$};
    \path (1) edge[->,loop above] node {$0|0$} (1);
    \path (1) edge[->,loop below] node {$1|1$} (1);
    \foreach\i/\j\l in {01e/10e/1|0,10e/01e/0|0,01o/10o/1|1,10o/01o/0|1} {
      \draw[->,decorate,decoration={single line,raise=2pt}] (\i) -- node {\small $\l$} (\j);
    };
    \foreach\i/\j\l in {10e/1/1|1,01o/1/0|0} {
      \draw[->] (\i) -- node {\small $\l$} (\j);
    };
  \end{fsa}
\end{equation}
It does not define a continuous self-map of $\{0,1\}^\omega$;
according to taste, $M$ may be considered to be a discontinuous
self-map (with discontinuity locus $\{(01)^\omega,(10)^\omega\}$); or
a relation that it two-valued at these points and defines a map
elsewhere; or a map that is well-defined an continuous on
$L\coloneqq\{0,1\}^\omega\setminus\{(01)^\omega,(10)^\omega\}$, if one
declares only the identity state $E$ to be final. See below for the
connection between $\mu$ and $M$.

\section{The covering map to $2$-adics}
The substitution $\zeta$ has fixed length (all images of letters have
length $2$), or more pedantically said the substitution $\zeta$
factors via $\{0,1\}\mapsto\{\bullet\}$ to the $1$-letter substitution
$\bullet\mapsto\bullet\bullet$. Its Bratteli-Vershik diagram has one
vertex and two edges; the corresponding automaton is
\[\begin{fsa}
    \node at (-2,0) {\large $\mathscr B$:};
    \node[state,minimum size=6mm] (L) {};
    \path (L) edge[loop left] node {$0|0$} (L) edge[loop right] node {$1|1$} (L);
  \end{fsa}
\]
The automorphism $\tau$ of $\{0,1\}^\omega$ given by Bratteli-Vershik dynamics is the odometer; adding it to the automaton above, we get
\begin{equation}\label{eq:tau}
  \begin{fsa}[baseline=0mm]
    \node at (-2,0) {\large $\mathscr B_\tau$:};
    \node[state,minimum size=6mm] (tau) at (0,0) {$\tau$};
    \node[state,minimum size=6mm] (L) at (3,0) {$1$};
    \path (tau) edge[loop left] node {$1|0$} (tau) edge node {$0|1$} (L);
    \path (L) edge[loop above] node {$0|0$} (L) edge[loop below] node {$1|1$} (L);
  \end{fsa}
\end{equation}
Identifying $2$-adic numbers with the sequence of their digits, one
immediately sees that the $\omega$-language $L(\mathscr B)$ is
identified with $\Z_2$, and under this identification
$\tau(z)=z+1$. Define also on $\Z_2$ the doubling map $\zeta(z)=2z$,
and note that $(\alpha\mapsto\tau,\zeta\mapsto\zeta)$ gives an action
of the semigroup $B(1,2)_+$ on $\Z_2$, as a group of affine maps. The
automaton giving $\zeta$ first prints a `$0$', memorizing the
just-read letter, and then copies its input delayed one time unit
(compare with~\eqref{eq:zeta}):
\[\begin{fsa}
    \node at (-2,0) {\large $\mathscr B_\zeta$:};
    \node[state,minimum size=6mm] (z0) at (0,0) {$\zeta$};
    \node[state,minimum size=6mm] (z1) at (3,0) {};
    \path (z0) edge[bend left=10] node {$1|0$} (z1)
    (z0) edge[loop left] node {$0|0$} (z0)
    (z1) edge[bend left=10] node {$0|1$} (z0)
    (z1) edge[loop right] node {$1|1$} (z1);
  \end{fsa}
\]

\begin{thm}\label{thm:factor}
  The Thue-Morse system factors onto the odometer: there is a
  continuous map $\pi\colon L(\mathscr A)\twoheadrightarrow\Z_2$ that
  interlaces the actions of $B(1,2)_+$.
\end{thm}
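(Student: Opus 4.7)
The plan is to define $\pi$ by the letterwise projection on edge labels, $i_j\mapsto i$, and then verify that this projection intertwines the generators of $B(1,2)_+$ by comparing the transducers $\mathscr A_\mu,\mathscr A_\zeta$ with their counterparts $\mathscr B_\tau,\mathscr B_\zeta$. Continuity of $\pi$ is automatic since the $n$-th letter of $\pi(w)$ depends only on the $n$-th letter of $w$. For surjectivity, I would inspect the inner part of the automaton $\mathscr A$: from each of the six vertices $a,\dots,f$ exactly one outgoing edge is minimal (labeled $0_?$) and exactly one is maximal (labeled $1_?$). Hence starting at any vertex every binary sequence is realized by a unique path, and $\pi$ maps $L(\mathscr A)$ onto $\{0,1\}^\omega=L(\mathscr B)\cong\Z_2$.

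For the $\zeta$-equivariance, I would argue directly at the level of maps rather than transducers: on $L(\mathscr A)$ the operation $\zeta=\sigma_0^{-1}$ prepends the unique minimal edge abutting to the start vertex of a path; but the minimal edges all carry labels of the form $0_?$, so $\pi(\zeta(w))$ is $0$ followed by $\pi(w)$, which is exactly multiplication by $2$ on $\Z_2$, i.e.\ the action of $\mathscr B_\zeta$. Thus $\pi\circ\zeta=\zeta\circ\pi$.

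For the $\mu$-equivariance, the cleanest route is a transducer-level comparison. I would read off the labels of $\mathscr A_\mu$ and apply the projection $i_j|k_\ell\mapsto i|k$ both on input and output. A case inspection of the listed edges shows three patterns: every transition among the $\mu$-states carries a label of the form $1_k|0_\ell$, projecting to $1|0$; every transition from a $\mu$-state into the identity subautomaton carries a label of the form $0_k|1_k$, projecting to $0|1$; and every identity-state transition $i_j|i_j$ projects to $i|i$. Collapsing all $\mu_{jk}$ to a single state and all of $a,\dots,f$ to a single state, the projected transducer is exactly $\mathscr B_\tau$ of~\eqref{eq:tau}. This gives $\pi\circ\mu=\tau\circ\pi$. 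Since $B(1,2)_+$ is generated by $\alpha$ and $\zeta$, the two commutations on generators extend to the whole semigroup.

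The main obstacle is really just the bookkeeping in the last paragraph: one must check that the projection of $\mathscr A_\mu$ is consistent, i.e.\ that all the $\mu_{jk}$-to-$\mu_{\ell m}$ transitions genuinely project to the same label $1|0$ regardless of the subscripts, and similarly for the $\mu$-to-identity edges. This is visible on the diagram but is the one substantive verification; everything else is direct from the definitions. An alternative would be to verify $\pi\circ\mu=\tau\circ\pi$ pointwise using the description of $\mu$ as ``flip $1$s to $0$s until the first $0$, then flip it to $1$'' — which is literally the odometer — but the transducer comparison is both shorter and more in keeping with the automaton-theoretic viewpoint of the paper.
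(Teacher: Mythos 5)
Your equivariance argument is essentially the paper's own proof: forgetting decorations $i_j\mapsto i$ yields a label-preserving morphism of automata from $\mathscr A_\mu$ to $\mathscr B_\tau$ and from $\mathscr A_\zeta$ to $\mathscr B_\zeta$, whence $\tau\circ\pi=\pi\circ\mu$ and $\pi\circ\zeta=\zeta\circ\pi$; your case check of the three label patterns ($1_k|0_\ell$, $0_k|1_k$, $i_j|i_j$) is exactly the verification the paper leaves to the reader.

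There is, however, a concrete error in your surjectivity step. In a label $i_j$ the subscript names the \emph{target} of the edge, and the bit $i$ records whether that edge is the minimal or maximal edge \emph{ending} at $j$. Hence each state of $\mathscr A$ has exactly one incoming edge labeled $0_j$ and one labeled $1_j$, but its two \emph{outgoing} edges need not carry different bits: both edges leaving $a$ are labeled $1_b$ and $1_c$, both edges leaving $d$ are labeled $0_a$ and $0_e$, and similarly for $c$ and $f$. So the claim ``from each of the six vertices exactly one outgoing edge is minimal and exactly one is maximal, hence starting at any vertex every binary sequence is realized by a unique path'' is false: no path starting at $a$ projects to a word beginning with $0$, and no path starting at $c$ projects to a word beginning with $1$. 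The determinism runs backwards, which is precisely Lemma-Definition~\ref{ld:p}: every finite binary word $w_0\cdots w_{n-1}$ lifts to exactly one path of length $n$ \emph{ending} at each prescribed state, hence to exactly six paths altogether, and a compactness (K\"onig's lemma) argument then lifts infinite words, giving surjectivity. Alternatively, surjectivity follows with no combinatorics at all: $\pi(L(\mathscr A))$ is a nonempty, closed, $\tau$-invariant subset of $\Z_2$, and the odometer is minimal. With either repair the rest of your argument stands.
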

\begin{proof}
  This is immediately checked on the automata: if one forgets letter
  decorations by replacing letter `$i_j$' by `$i$', there exists a
  ``morphism of automata'', namely an initial-state-, final-state- and
  label-preserving graph morphism, from $\mathscr A$ to $\mathscr B$
  and from $\mathscr A_\mu$ to $\mathscr B_\tau$; so
  $\tau\circ\pi=\pi\circ\mu$. The same statement may be checked for
  $\zeta$, either via automata or directly.
\end{proof}

Much can be said about that factor map $\pi$. First, fibres typically
have cardinality $2$; though $0^\omega$ and $1^\omega$ have $4$
preimages. Note that these are precisely the elements of $\Omega$ with
an additional symmetry, i.e.\ on which $D_\infty$ does not act freely.

\def\8{\texttt0}\def\9{\texttt1} Let us be more precise. First, there
is an order-$2$ symmetry in $L(\mathscr A)$, given on its automaton by
a half-turn. One might therefore consider rather an intermediate
quotient between $L(\mathscr A)$ and $\Z_2$, of the form
$\Z/2\cdot\Z_2$; the map $L(\mathscr A)\to\Z/2\cdot\Z_2$ is given by
$z\mapsto(s(z),\pi(z))$ with $s(z)=0$ if $z$ starts in $\{a,d,e\}$ and
$s(z)=1$ if $z$ starts in $\{b,c,f\}$. (Going back to the
interpretation of states as letters ${}^xy^z$, we use for $s(z)$ the
letter $y$ of the initial vertex of $z$). There remains the issue of
eventually-minimal and -maximal paths; those are the paths mapping
under $\pi$ to $\Z\subset\Z_2$. We thus replace $\Z_2$ by
\[\widetilde\Z_2\coloneqq(\Z_2\setminus\Z)\sqcup(\Z\times\Z/2),
\]
a topological space with a free action of $\Z$ in which the topology
is defined by declaring as open neighbourhoods of
$(0,t)\in\Z\times\Z/2$ all
$\bigsqcup_{n\ge N}2^{2n+t}(2\Z_2+1)\sqcup\{(0,t)\}$ for $N\in\N$, and
of course their $\Z$-translates. In other words, numbers with binary
representation $\8^n\texttt{1*}\in\Z_2$ are close to
$(\8^n\texttt{1*},n\bmod 2)\in\Z\times\Z/2$. There is an obvious map
$\widetilde\Z_2\to\Z_2$ given by the identity on $\Z_2\setminus\Z$ and
$(n,t)\mapsto n$ on $\Z\times\Z/2$.

The $\Z$-action is given on $\widetilde\Z_2$ by
$\widetilde\tau(z)=z+1$ and $\widetilde\tau(z,t)=(z+1,t)$. The
non-invertible dynamics $\tau\colon\Z_2\righttoleftarrow$ also lift to
$\widetilde\Z_2$: one defines a map $\widetilde\zeta$ on
$\widetilde\Z_2$ by $\widetilde\zeta(z)=2z$ and
$\widetilde\zeta(z,t)=(2z,t+1)$. We thus have an action of $B(1,2)_+$
on $\widetilde\Z_2$, compatible with the map $\widetilde\Z_2\to\Z_2$.

We can then improve the map $\pi$ into a map
$\widetilde\pi\colon L(\mathscr A)\to\widetilde\Z_2$ as follows: if
$z\in L(\mathscr A)$ is path not ending in $0^\omega$ or $1^\omega$,
then $\widetilde\pi(z)$ is the element $\pi(z)$ of $\Z_2\setminus\Z$
read along the labels of the path. If however $z$ ends in $0^\omega$
or $1^\omega$, then $\widetilde\pi(z)$ is $(\pi(z),t)$ with $t=0$ if
the path is in $\{b,e\}$ at arbitrarily large even positions, and
$t=1$ if the path is in $\{b,e\}$ at arbitrarily large odd positions.

\begin{thm}\label{thm:homeo}
  The map $z\mapsto(\widetilde\pi(z),s(z))$ is a homeomorphism between
  $L(\mathscr A)$ and $\widetilde\Z_2\times\Z/2$, and the
  homeomorphism $\mu$ translates, via this homeomorphism, to
  ``addition with a cocycle''
  \[\widetilde\mu\colon(z,s)\mapsto(z+1,s+\phi(z+1));\]
  The cocycle $\phi\colon\widetilde\Z_2\to\Z/2$ is given by
  \begin{equation}\label{eq:phi}
    \phi(2^n(2\Z_2+1))=\phi(2^n(2\Z+1),t)=n+1\bmod 2,\qquad \phi(0,t)=t+1.
  \end{equation}
\end{thm}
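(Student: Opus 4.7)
The plan is to verify in turn the three assertions: bijectivity of $z\mapsto(\widetilde\pi(z),s(z))$, continuity in both directions, and the explicit cocycle formula. For bijectivity, if $z\in L(\mathscr A)$ is not eventually minimal or maximal then $\pi(z)\in\Z_2\setminus\Z$ and the fibre $\pi^{-1}(\pi(z))$ has exactly two elements (as noted after Theorem~\ref{thm:factor}), swapped by the half-turn symmetry of $\mathscr A$ that exchanges top and bottom and hence $s$; so $(\pi(z),s(z))$ separates them. For $z$ eventually minimal or maximal, $\pi(z)$ is an integer with four preimages, and a direct check shows that these realise each of the four values in $\{\pi(z)\}\times(\Z/2)^2$: over $0\in\Z_2$ the four minimal paths $(b,c,b,c,\dots)$, $(c,b,c,b,\dots)$, $(d,e,d,e,\dots)$, $(e,d,e,d,\dots)$ map to $((0,0),1)$, $((0,1),1)$, $((0,1),0)$, $((0,0),0)$ respectively, and an analogous check handles the four maximal paths over $-1$.

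For continuity, $s$ is locally constant and $\pi$ is continuous (by the automaton morphism of Theorem~\ref{thm:factor}), so everything reduces to matching the topology of $\widetilde\Z_2$ at the eight exceptional points. A parity analysis in $\mathscr A$ provides this: near $(b,c,b,c,\dots)$, any deviation at position $n$ is either $b\to a$ (introducing a $1$ label at even $n$) or $c\to f$ (introducing a $0$ at odd $n$, immediately followed by a $1$ at the next, even, position because $f$ has only $1$-outgoing edges); hence the first $1$ of $\pi(z)$ sits at an even position and $\pi(z)\in 2^{2k}(2\Z_2+1)$ for large $k$, which is exactly a basic neighbourhood of $(0,0)\in\widetilde\Z_2$. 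The analogous analysis at each of the other seven exceptional paths makes $(\widetilde\pi,s)$ map basic neighbourhoods to basic neighbourhoods bijectively, so both the map and its inverse are continuous.

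For the dynamics, $\widetilde\pi(\mu(z))=\widetilde\pi(z)+1$ follows from Theorem~\ref{thm:factor} together with the observation that $\mu$ preserves $t$ on exceptional paths (e.g.\ $(b,a,b,a,\dots)\mapsto(e,d,e,d,\dots)$, both with $t=0$). The $s$-shift is read off $\mathscr A_\mu$: from an initial state $\mu_{jk}$, the input $\pi(z)$ is processed by a chain of ``carry'' transitions $1_?|0_?$ between $\mu$-states terminated by an exit $0_?|1_?$ into an identity state, of length equal to the $2$-adic valuation $n$ of $\pi(\mu(z))$. Since the $s$-change is literally $s(j)+s(k)\bmod 2$, one enumerates the ten initial states: the six direct-exit states give change $1$ at $n=0$; the states $\mu_{ad},\mu_{fc}$ give change $0$ at odd $n$ (escape via the $2$-cycles $\mu_{ad}\rightleftarrows\mu_{be}$ and $\mu_{fc}\rightleftarrows\mu_{eb}$); and $\mu_{be},\mu_{eb}$ give change $1$ at even $n\ge 2$. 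The change thus equals $n+1\bmod 2$ in all cases, recovering~\eqref{eq:phi} away from $0$; the remaining values $\phi(0,t)=t+1$ are verified directly on the four maximal paths. The main obstacle is the combinatorial bookkeeping in $\mathscr A_\mu$; the conceptual point that makes~\eqref{eq:phi} drop out is that the $s$-change factors through the parity of the carry length, which for each initial $\mu_{jk}$ is forced by the structure of the automaton.
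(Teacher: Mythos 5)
Your argument is correct in substance and hits the same three checkpoints as the paper (bijectivity, continuity, the cocycle formula), but the bijectivity step goes by a genuinely different route. The paper reconstructs the preimage of $(x,s)$ directly: using the reverse-path uniqueness of Lemma-Definition~\ref{ld:p}, it shows that the starting state of a path of $\mathscr A$ with undecorated label $x$ is forced by the parity of the initial block $0^k1$ of $x$ together with the bit $s$ (plus the extra bit $t$ when $x$ is eventually constant), after which the successive states are determined inductively. You instead quote the fibre cardinalities of $\pi$ ($2$ generically, $4$ over $\Z$) and use the half-turn symmetry to see that $s$, respectively $(t,s)$, separates the points of each fibre. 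That is cleaner where it applies, but has two soft spots worth flagging: (i) the ``generically $2{:}1$, $4{:}1$ over $\Z$'' count is only asserted, not proved, after Theorem~\ref{thm:factor} --- the paper's reconstruction via Lemma-Definition~\ref{ld:p} is in effect where that count gets established, so you are leaning on a fact whose honest proof is the very argument you are replacing; (ii) you verify the four-element fibres only over $0$ and $-1$, whereas for a general integer one still needs that the four eventually-minimal (or -maximal) paths above it realise all four values of $(t,s)$; this does follow from your computations at $0$ and $-1$ together with reverse determinism of $\mathscr A$ (the finite prefix is forced by the tail) and the half-turn, but it should be said. For the dynamics, your computation is organised by the initial state $\mu_{jk}$ of $\mathscr A_\mu$ and the parity of the carry chain, whereas the paper organises it by the input prefix $1^k0$; these are the same computation, and your observation that the $s$-change $s(j)+s(k)$ depends only on the initial state while all carry lengths reachable from that state share one parity is a nice way to make~\eqref{eq:phi} drop out. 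Your individual checks (the four minimal paths over $0$, the carry cycles $\mu_{ad}\rightleftarrows\mu_{be}$ and $\mu_{fc}\rightleftarrows\mu_{eb}$, the preservation of $t$ on maximal paths, and the parity analysis of deviations from $(b,c,b,c,\dots)$) are all consistent with the automaton.
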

In terms of binary expansions, the cocycle is given by
$\phi(\8^n\texttt{1*})=\phi(\8^n\texttt{1*},t)=n+1\bmod 2$ and
$\phi(\8^\omega,t)=t+1$. Let us state a fundamental property of
$\mathscr A$, which comes from an analogous statement for the
substitution $\zeta$ and which we have already implicitly used:
\newtheorem{lemdef}[thm]{Lemma-Definition}
\begin{lemdef}\label{ld:p}
  Given a sequence $w\in\{0,1\}^{-\N}$ and a state $h\in\mathscr A$,
  there is a unique reverse path $p_{w,h}\colon-\N\to\mathscr A$
  ending in $h$ and whose labels project to $w$ under forgetting
  decorations.\qed
\end{lemdef}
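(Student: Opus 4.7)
The plan is to reduce the lemma to the following combinatorial fact about $\mathscr A$: for every state $h\in\{a,b,c,d,e,f\}$ and every $\epsilon\in\{0,1\}$, there is exactly one edge $h'\to h$ of $\mathscr A$ whose label projects to $\epsilon$ (that is, whose label is $\epsilon_h$). Conceptually this reflects that $\zeta$ is a substitution of constant length $2$: each letter of $\Sigma$ appears exactly once as the first letter of some image $\zeta(h')$, producing the unique minimal edge $h'\to h$ labeled $0_h$, and exactly once as the second letter of some image $\zeta(h'')$, producing the unique maximal edge $h''\to h$ labeled $1_h$. To establish this I would simply enumerate the incoming edges from the diagram of $\mathscr A$, or equivalently read them off the six rules in \eqref{eq:extendzeta}; the verification is finite and mechanical, and is essentially the same observation that the minimal, respectively maximal, path ending at any given vertex of $\mathscr V$ is unique.

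With this incidence fact in hand, the lemma follows by backward induction on the distance from $0$. I set $p_{w,h}(0):=h$, and suppose inductively that $p_{w,h}(-n)=h_{-n}$ has been constructed so that the edges $p_{w,h}(-i)\to p_{w,h}(-i+1)$ for $1\le i\le n$ project to $w_{-i}$. Applying the incidence fact to the state $h_{-n}$ and the symbol $w_{-n-1}$ yields a uniquely determined predecessor $h_{-n-1}$ together with a uniquely determined edge $h_{-n-1}\to h_{-n}$ labeled $(w_{-n-1})_{h_{-n}}$; set $p_{w,h}(-n-1):=h_{-n-1}$. The resulting reverse path ends at $h$ and projects letter-by-letter to $w$ by construction. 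Uniqueness is built in: any competitor $p'\colon-\N\to\mathscr A$ ending at $h$ whose labels project to $w$ must agree with $p_{w,h}$ at every backward step, because the predecessor at each stage is forced by the incidence fact.

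The only possible obstacle is the combinatorial check in the first step, but with only six states and twelve edges this is immediate. Notably, no compactness or limit argument is needed, since uniqueness is already obtained at each finite stage; and the same induction scheme would apply verbatim to any automaton coming from a constant-length substitution in which each letter of the extended alphabet appears exactly once in each position of the images.
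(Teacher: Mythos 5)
Your backward induction is the right argument, and since the paper states this Lemma--Definition with only a \qed{} it is precisely the verification one is expected to supply: each state $h$ of $\mathscr A$ has exactly two incoming edges, one labeled $0_h$ and one labeled $1_h$, so at every backward step the predecessor is forced by the corresponding bit of $w$, and existence and uniqueness follow with no compactness argument. However, the conceptual justification you offer for this incidence fact is false, and it inverts the correspondence between substitution rules and edges. Writing the collared substitution in the renamed alphabet as $\zeta(a)=db$, $\zeta(b)=ca$, $\zeta(c)=ba$, $\zeta(d)=ef$, $\zeta(e)=df$, $\zeta(f)=ce$, the letter $d$ occurs as the \emph{first} letter of both $\zeta(a)$ and $\zeta(e)$, while $a$ occurs as the first letter of no image (and, dually, $a$ is the second letter of two images); so it is not true that each letter appears exactly once in each position. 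What is true, and what your induction actually needs, is that the two edges of $\mathscr A$ ending at $h$ are sourced at the first and the second letter of the single word $\zeta(h)$ and carry the labels $0_h$ and $1_h$ respectively; uniqueness of the incoming edge with a given projected label is therefore automatic from $\zeta$ being a well-defined substitution of constant length $2$, with no injectivity hypothesis on first or second letters. (The occurrence counts you invoke control the \emph{outgoing} edges, where the analogous uniqueness genuinely fails: $a$ has two outgoing edges labeled $1_c$ and $1_b$, both projecting to $1$.) Since you ultimately fall back on enumerating the twelve edges, the proof survives intact; but the erroneous gloss should be deleted, and your closing generalization should accordingly require only that the substitution have constant length, not that every letter occur exactly once in each position of the images.
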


\begin{proof}[Proof of Theorem~\ref{thm:homeo}]
  First, the map $(\widetilde\pi,s)$ is continuous: on paths not
  ending in $0^\omega$ or $1^\omega$, the first $n$ bits of its output
  depend only on the first $n$ edges. For paths ending in $0^\omega$
  or $1^\omega$, the paths crossing $\{b,e\}$ at even, respectively
  odd positions are in disjoint clopens.

  \def\8{\texttt0}\def\9{\texttt1}
  We turn to bijectivity of $\widetilde\mu$.  Given a ``symmetry'' bit
  $s$ and a bit sequence $x=x_0x_1\dots$ representing an element of
  $\widetilde\Z$ (with an additional bit in case the sequence is
  ultimately constant): consider for large $m\in\N$ the six reverse
  paths $p_{0^{-\omega}x_0\dots x_m,h}$ given by Lemma-Definition~\ref{ld:p}.
  If $x$ starts by $\8^{2n+1}\9$ then these paths must start in
  $\{c,d\}$ and the symmetry bit decides which; if $x$ starts in
  $\8^{2n}\9$ a similar (slightly more complicated) reasoning holds;
  if $x\in\{\8^\omega,\9^\omega\}$ then the additional bit determines
  where the preimage path starts. In all cases, the starting state of
  the preimage of $(x,s)$ is uniquely determined, and its successive
  states are determined by induction.

  We next show that $\mu$ is carried to $\widetilde\mu$. Without loss
  of generality: consider a path $p$ starting at some vertex in
  $\{a,d,e\}$, i.e.\ with $s(z)=0$. If $p$ starts with labels
  ${}_e1_*^{2n}0_*$ then $\mu$ maps it to a path starting at $b$; if
  $p$ starts with ${}_a1_*^{2n+1}0_*$ then $\mu$ maps it to a path
  starting at $d$; and if $p$ starts with label ${}_d0_*$ then $\mu$
  maps it to a path starting at $b$ or $f$; so if $p$ starts with
  $1^{2n}0$ then the symmetry bit $s$ must be flipped while if $p$
  starts with $1^{2n+1}0$ then $s$ should not be changed.

  Consider finally the action on $(x,t)\in\Z\times\Z/2$, again with
  ``symmetry'' bit $s=0$. If $x\neq-1$, then the same argument as
  above holds. If $x=-1$, then the path above $((x,t),0)$ is either
  $(1_a1_b)^\omega$ or $(1_e1_f)^\omega$. In the first case, $t=1$ and its
  image under $\mu$ is $(0_d0_e)^\omega$, in the second case, $t=0$ and
  its image under $\mu$ is $(0_b0_c)^\omega$.
\end{proof}

The connection between $\mu$ and the map $M$ from~\eqref{eq:M} is via
the ``difference operator'' $D\colon\{0,1\}^\omega\to\{0,1\}^\omega$,
given by
\[D(x_0,x_1,\dots)=(x_0+x_1\bmod 2,x_1+x_2\bmod 2,\dots);\]
it is a $2:1$ map implemented by the automaton
\[\begin{fsa}
    \node[state] (d0) at (0,0) {$D$};
    \node[state] (d1) at (3,0) {$D$};
    \path (d0) edge[->,loop left] node {$0|0$} (d0)
    edge[->,bend left=10] node {$0|1$} (d1);
    \path (d1) edge[->,loop right] node {$1|0$} (d1)
    edge[->,bend left=10] node {$1|1$} (d0);
  \end{fsa}
\]
and one immediately checks, by computing the corresponding transducers
and identifying $\Z_2$ with $\{0,1\}^\omega$, the identity
$\tau\circ D=D\circ M$.

\section{Natural extensions and limit spaces}
The dynamical system $(L(\mathscr A),\sigma)=(P(\mathscr V),\sigma)$
admits a \emph{natural extension}: a topological space $\mathfrak X$
equipped with a map $\mathfrak X\twoheadrightarrow L(\mathscr A)$ and
a self-homeomorphism inducing $\sigma$ on $L(\mathscr A)$, universal
for these properties. It may be constructed as
\[\widehat L(\mathscr A)\coloneqq\projlim(L(\mathscr A),\sigma),\]
namely the space of sequences
$(z_0,z_{-1},\dots)\in L(\mathscr A)^{-\N}$ such that
$\sigma_{L(\mathscr A)}(z_i)=z_{i+1}$ for all $i\le-1$. The one-sided
shift on $\widehat L(\mathscr A)$ is bijective, and there is a natural
map $\widehat L(\mathscr A)\to L(\mathscr A)$ given by
$(z_0,z_{-1},\dots)\mapsto z_0$ which interlaces the shifts
$\sigma_{\widehat L(\mathscr A)}$ and $\sigma_{L(\mathscr A)}$.

Recall that $L(\mathscr A)$ is a subset of $\Sigma^\N$ for
$\Sigma=\{0_a,1_a,\dots,0_f,1_f\}$, and $\sigma$ is as usual induced
by the shift on $\Sigma^\N$; in fact $L(\mathscr A)$ is the set of
$\omega$-paths in the graph $\mathscr A$. Thus
$\widehat L(\mathscr A)$ is naturally the set of two-sided infinite
paths in $\mathscr A$, a subset of $\Sigma^\Z$.

Consider the ``Baumslag-Solitar group''
\[B(1,2)=\langle\alpha,\zeta\mid\alpha^2\circ\zeta=\zeta\circ\alpha\rangle.\]
We have an action of $B(1,2)$ on $\widehat L(\mathscr A)$, defined as
follows: $\zeta$ is the inverse of the shift $\sigma$ on
$\widehat L(\mathscr A)\subset\Sigma^\Z$. Using the paths $p_{w,h}$
from Lemma-Definition~\ref{ld:p}, the homeomorphism $\mu$ of
$L(\mathscr A)$ induces a homeomorphism $\widehat\mu$ of
$\widehat L(\mathscr A)$ by
\[\widehat\mu(p_{w,h}.q)=p_{w,h'}.\mu(q)\text{ where $h'$ is the initial vertex of }\mu(q),\]
and we let $\alpha$ act as $\widehat\mu$.
\begin{lem}
  The above defines an action of $B(1,2)$ on $\widehat L(\mathscr A)$.
\end{lem}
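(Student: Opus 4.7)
The plan is to verify that each of $\widehat\mu$ and $S^{-1}$ (where $S$ denotes the shift on $\widehat L(\mathscr A)$) is a self-homeomorphism of $\widehat L(\mathscr A)$, and then that the single defining relation $\alpha^2\circ\zeta=\zeta\circ\alpha$ of $B(1,2)$ holds for the assignments $\alpha\mapsto\widehat\mu$ and $\zeta\mapsto S^{-1}$. Because $B(1,2)$ is one-relator, verifying that one identity is enough.

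The starting point is the canonical decomposition: every $z\in\widehat L(\mathscr A)$ writes uniquely as $z=p_{w,h}.q$, with $q=(z_0,z_1,\dots)\in L(\mathscr A)$ starting at a vertex $h$ and $w\in\{0,1\}^{-\N}$ the projection of the reverse half; uniqueness is Lemma-Definition~\ref{ld:p}. From this description, $\widehat\mu$ is manifestly continuous: it leaves $w$ fixed, applies the known homeomorphism $\mu$ to $q$, and the reconstructed reverse path $p_{w,h'}$ depends continuously on its endpoint $h'=\mathrm{start}(\mu(q))$ by the same Lemma-Definition. Invertibility of $\widehat\mu$ follows by using $\mu^{-1}$ in the same recipe. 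The map $S^{-1}$ is a homeomorphism because $S$ is a continuous bijection of a compact metrizable space.

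To verify the relation, I would unpack both sides via the decomposition. Let $e$ be the last edge of $p_{w,h}$, with source $h^-$; write $w^-$ for $w$ with its last bit dropped. Then
\[S^{-1}(z)=p_{w^-,h^-}.(e\cdot q),\qquad S^{-1}(\widehat\mu(z))=p_{w^-,h^{'-}}.(e'\cdot\mu(q)),\]
where $e'$ is the last edge of $p_{w,h'}$ and $h^{'-}=\mathrm{start}(e')$. The edges $e$ and $e'$ share the same $\{\min,\max\}$-decoration (the last bit of $w$) but in general end at different vertices. By the uniqueness in Lemma-Definition~\ref{ld:p}, the identity $\widehat\mu^2\circ S^{-1}(z)=S^{-1}\circ\widehat\mu(z)$ collapses to a single forward-half equality $\mu^2(e\cdot q)=e'\cdot\mu(q)$, since matching starting vertices automatically forces the reverse halves (which already share projection $w^-$) to coincide.

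It then remains to dispatch this in the two $\{\min,\max\}$ cases. In the minimal case, $e=m_h$ and $e'=m_{h'}$, and the identity is exactly the $B(1,2)_+$ relation $\mu^2\circ\sigma_0^{-1}=\sigma_0^{-1}\circ\mu$ already established on $L(\mathscr A)$ as the $\lambda$-image of $\sigma^2\zeta=\zeta\sigma$ on $\Omega$. In the maximal case, $e=M_h$ and $e'=M_{h'}$ are the unique maximal edges into $h,h'$; I would first record the immediate equality $\mu(m_h\cdot q)=M_h\cdot q$, which follows by applying the definition of $\mu$ to a path whose very first edge is non-maximal, then combine it with the minimal case to obtain $\mu(M_h\cdot q)=\mu^2(m_h\cdot q)=m_{h'}\cdot\mu(q)$, and finally apply $\mu$ once more to conclude $\mu^2(M_h\cdot q)=M_{h'}\cdot\mu(q)$. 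The main potential pitfall is the bookkeeping of initial vertices at each shift and each application of $\mu$; but once the forward-half identity holds, the reverse halves coincide automatically by uniqueness, so no further combinatorics on the negative side is needed.
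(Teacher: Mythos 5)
Your proof is correct, and its skeleton coincides with the paper's: decompose $z=p_{w,h}.q$ via Lemma-Definition~\ref{ld:p}, use the uniqueness of reverse paths to collapse the relation to an identity on the forward halves, and check that identity on the first transferred edge. (The paper verifies the equivalent conjugated form $\sigma\circ\widehat\mu^2=\widehat\mu\circ\sigma$, moving the first forward edge $q_0$ across the dot rather than the last backward edge; this difference is immaterial.) Where you genuinely diverge is in how the key forward-half identity is justified --- namely that $\mu^2$ applied to a path prepended with an edge of decoration $s$ yields $\mu$ of the path prepended with the $s$-decorated edge into its new starting vertex. The paper computes the transducer $\mathscr A_{\mu^2}$ explicitly in~\eqref{eq:mu2} and reads off that its recurrent part feeds into $\mathscr A_\mu$ while copying the decoration bit. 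You instead derive the same fact algebraically: the minimal case is the relation $\mu^2\circ\sigma_0^{-1}=\sigma_0^{-1}\circ\mu$ of the $B(1,2)_+$-action already established on $L(\mathscr A)$, and the maximal case follows from it together with the elementary observation $\mu(0_h\,q)=1_h\,q$ (valid because each vertex of $\mathscr A$ has exactly one incoming edge of each decoration, which is also what makes your reduction via Lemma-Definition~\ref{ld:p} work). Your route is more economical for this lemma and avoids any transducer computation; the paper's choice pays for itself later, since the explicit $\mathscr A_{\mu^2}$ is exactly what is reused to prove that $\mathscr N$ is nuclear in the proof of Theorem~\ref{thm:solenoid}.
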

\begin{proof}
  It suffices to check the relation. Since we will need it later, here
  is the automaton computing $\mu^2$ (only half of it is drawn, the other half is
  symmetric):
  \begin{equation}\label{eq:mu2}
    \begin{fsa}[every state/.style={minimum size=7mm,text width=7mm,inner sep=0mm,align=center},baseline=0mm]
    \path[dotted,draw,-] (0,-2.5) -- (0,2.5);
    \clip (0,-2.9) rectangle (-8.5,3);
    \node at (-8,1.6) {\large $\mathscr A_{\mu^2}$:};
    \node[state] (a) at (1,2) {$a$};
    \node[state] (d) at (1,0) {$d$};
    \node[state] (c) at (-1,0) {$c$};
    \node[state] (f) at (-1,-2) {$f$};
    \node[state] (b) at (-1,2) {$b$};
    \node[state] (e) at (1,-2) {$e$};
    \node[state] (muca) at (-3,2) {$\mu_{ca}$};
    \node[state] (muba) at (-3,0) {$\mu_{ba}$};
    \node[state] (muce) at (-3,-2) {$\mu_{ce}$};
    \node[state] (muad) at (-5,0) {$\mu_{ad}$};
    \node[state] (mube) at (-7,0) {$\mu_{be}$};
    \node[state] (mudf) at (3,-2) {$\mu_{df}$};
    \node[state] (muef) at (3,0) {$\mu_{ef}$};
    \node[state] (mudb) at (3,2) {$\mu_{db}$};
    \node[state] (mufc) at (5,0) {$\mu_{fc}$};
    \node[state] (mueb) at (5,2) {$\mu_{eb}$};
    \node[state] (mu2ca) at (-6,2.5) {$\mu^2_{ca}$};
    \node[state] (mu2ba) at (-6.5,1.5) {$\mu^2_{ba}$};
    \node[state] (mu2ce) at (-5,-2.5) {$\mu^2_{ce}$};
    \node[state] (mu2ad) at (-6.5,-1.5) {$\mu^2_{ad}$};
    \node[state] (mu2be) at (-8,-2) {$\mu^2_{be}$};
    \foreach\i/\j/\l in {a/c/1_c,c/f/0_f,f/d/1_d,d/a/0_a,muca/b/0_b|1_b,muba/c/0_c|1_c,muce/f/0_f|1_f,mudf/e/0_e|1_e,muef/d/0_d|1_d,mudb/a/0_a|1_a,muad/muca/1_c|0_a,muad/muba/1_b|0_a,muad/muce/1_c|0_e,mufc/mudb/1_d|0_b,mufc/muef/1_e|0_f,mufc/mudf/1_d|0_f} {
      \draw[->] (\i) -- node [inner sep=1pt] {\small $\l$} (\j);
    };
    \foreach\i/\j\l in {a/b/1_b,b/a/1_a,b/c/0_c,c/b/0_b,d/e/0_e,e/d/0_d,e/f/1_f,f/e/1_e,muad/mube/1_b|0_e,mube/muad/1_a|0_d} {
      \draw[->,decorate,decoration={single line,raise=2pt}] (\i) -- node {\small $\l$} (\j);
    };
    \foreach\i/\j/\l in {c/a/near start,b/a/very near start,b/e/,a/d/near start,c/e/} {
      \draw[->,decorate,decoration={single line,raise=2pt}] (mu2\i\j) -- node [\l] {\small $*_\i|*_\j$} (mu\i\j);
      \draw[->,decorate,decoration={single line,raise=-2pt}] (mu2\i\j) -- (mu\i\j);
    };    
  \end{fsa}
  \end{equation}

  Consider $x=p_{w,h}.q\in\widehat L(\mathscr A)$, with $q=q_0 r$ for
  an edge $q_0$ starting at $h$. Then
  $\sigma\widehat\mu^2(x)=p_{w,h_0}q_0'.\mu(r)$ for an edge $q_0'$
  starting at $h_0$ which is minimal if and only if $q_0$ is minimal;
  writing $s\in\{0,1\}$ be the label of $q_0$ without its decoration,
  and $h'$ for the initial vertex of $\mu(r)$, we get
  $\sigma\widehat\mu^2(x)=p_{w s,h'}.\mu(r)$.  On the other hand,
  $\widehat\mu\sigma(x)=\widehat\mu(p_{w,h}q_0.r)=\widehat\mu(p_{w
    s,h_1}.r)$ with $h_1$ the initial vertex of $r$, since $q_0$ is
  minimal if and only if $q_0'$ is minimal, so
  $\widehat\mu\sigma(x)=p_{w s,h'}.\mu(r)$. We thus have
  $\sigma\circ\widehat\mu^2=\widehat\mu\circ\sigma$.
\end{proof}

\subsection{Solenoids}
We embark on a quick detour of the $2$-adic solenoid. We start by the
short exact sequence
\[\begin{tikzcd}
    0\ar[r] & \Z\ar[r] & \Z[\tfrac12]\ar[r] & \Z[\tfrac12]/\Z\ar[r] & 0,
\end{tikzcd}\]
and apply Pontryagin duality (the dual of an Abelian group $A$ is
$\widehat A\coloneqq\operatorname{Hom}(A,\R/\Z)$) to obtain
\[\begin{tikzcd}
    0\ar[r] & \Z_2\ar[r] & \mathbb S_2\ar[r] & \R/\Z\ar[r] & 0.
  \end{tikzcd}
\]
Here $\mathbb S_2$, the Pontryagin dual of the discrete group
$\Z[\frac12]$, may be defined as $(\Z_2\times\R)/\Z$, with
antidiagonal action of $\Z$ on $\Z_2\times\R$, given by
$n\cdot(z,x)=(z+n,x-n)$. It is thus the suspension (a.k.a.\ mapping
torus) of $\Z_2$, namely $(\Z_2\times[0,1])/(z,1)\sim(z+1,0)$. Recall
that a group is discrete if and only if its dual is compact, and then
is torsion-free if and only if its dual is connected; and a group is
separable if and only if its dual is metrizable.

The Baumslag-Solitar group acts diagonally on $\Z_2\times\R$, by the
usual affine action: $\alpha(x)=x+1$ and $\zeta(x)=2x$, with
$x\in\Z_2$ or $x\in\R$. The beauty is that $\zeta$ is a contraction on
$\Z_2$ while an expansion on $\R$, so it induces on $\Z_2\times\R$,
and hence on $\mathbb S_2$, a hyperbolic map. On the suspension
$(\Z_2\times[0,1])/(z,1)\sim(z+1,0)$ we see
\[\alpha(z,x)=(z+2,x),\qquad\zeta(z,x)=\begin{cases}(2z,2x) & \text{ if }x\le1/2,\\(2z+1,2x-1) & \text{ if }x\ge1/2.\end{cases}
\]

Note that $\mathbb S_2$ is not the natural extension of
$(\Z_2,\zeta^{-1})$ (which we identified with $\{0,1\}^\Z$), but is a
quotient of it. Quite to the contrary of natural extensions, there is
a map in the opposite direction, $\Z_2\to\mathbb S_2$, given by
$z\mapsto(z,0)$. We prefer to consider a conjugate embedding
$z\mapsto(2z,0)$; it interlaces the action of $B(1,2)_+$ on $\Z_2$
with that of $B(1,2)$ on $\mathbb S_2$, via the obvious inclusion
$B(1,2)_+\subset B(1,2)$.

In summary, we have a space $\mathbb S_2$ which fibres over the circle
$\R/\Z$ with fibre $\Z_2$ and is foliated by real lines; parallel
transport along the circle induces on the fibre $\Z_2$ the dynamics
$\tau$; and the contracting dynamics $\zeta$ on $\Z_2$ may be combined
with the degree-$2$ covering on the circle to yield a hyperbolic
homeomorphism.

More precisely, by ``combined'' we mean that locally $\mathbb S_2$
decomposes as a product of a stable (Cantor set) direction and an
unstable (real interval) direction; this decomposition is preserved by
the dynamics $\zeta$. Consider the fixed point $(0,0)$. On a stable
direction $(\Z_2,0)$ of $\zeta$ which corresponds to the fibres of the
covering, the map $\zeta^{-1}$ is a well-defined expanding map, while
on the unstable direction $0\times(-1/2,1/2)$ which is a local section
of the covering, the map $\zeta$ expands by a factor of $2$.

\subsection{The Thue-Morse solenoid}
We are about to construct a space $\mathfrak S$ admitting much of the
properties of $\mathbb S_2$ mentioned in the previous paragraph: it
will be a topological space fibering over the circle and foliated by
real lines, equipped with an action of $B(1,2)$, containing a copy of
$(L(\mathscr A),\mu)$ as a fibre, such that the monodromy around the
circle induces the map $\mu$ on the fibre, and will be a quotient of
$\widehat L(\mathscr A)$.  Furthermore all these maps are equivariant
with respect to the actions of $B(1,2)$, respectively $B(1,2)_+$.

In fact, for the construction of $\mathfrak S$ a sizeable part of
Nekrashevych's theory of ``iterated monodromy groups'' may be
used. The point is that the automaton
$\mathscr N\coloneqq\mathscr A\cup\mathscr A_\mu\cup\mathscr
A_{\mu^{-1}}$ is ``nuclear'': for every $n\in\Z$, the recurrent states
of the automaton $\mathscr A_{\mu^n}$ are contained in $\mathscr
N$. By induction, it suffices to check this property for $n=2$, and
this is given by the automaton~\eqref{eq:mu2}.

We may thus define the space $\mathfrak S$: it is the quotient of
$\widehat L(\mathscr A)$ by ``asymptotic equivalence'', the relation
in which one declares two bi-infinite paths $w,w'$ to be equivalent if
there exists a bi-infinite path in $\mathscr N$ labeled $w|w'$.

\begin{thm}\label{thm:solenoid}
  The space $\mathfrak S=\widehat L(\mathscr A)/{\sim}$
  \begin{enumerate}
  \item is compact, metrizable, connected;
  \item fibres over the circle $\R/\Z$ with fibre $L(\mathscr A)$;
    monodromy around the circle induces the map $\mu$ on the fibre;
  \item admits an action of $B(1,2)$ in which $\zeta$ is
    ``hyperbolic'': locally $\mathfrak S$ decomposes as a product
    $V_s\times V_u$ in ``stable'' and ``unstable'' directions
    compatible with the action of $B(1,2)$; the action of $\zeta$ is
    contracting on the $V_s$ (which are contained in fibres of the
    fibration) and expanding on the $V_u$ (which are sections of the
    fibration);
  \item admits a quotient map to $\mathbb S_2$, induced by forgetting
    alphabet decorations; fibres have cardinality $2$ or $4$. There is
    a homeomorphism
    \[\mathfrak S\longrightarrow\Z/2\cdot\bigg(\frac{\widetilde\Z_2\times[0,1]}{(z,1)\sim(z+1,0)}\bigg),\]
    on the image of which the $B(1,2)$-action is given by ``affine
    action with cocycle'', and on which the map
    $\mathfrak S\twoheadrightarrow\mathbb S_2$ is given by
    $\widetilde\Z_2\to\Z_2$ and $\Z/2\to1$.
  \end{enumerate}
  Furthermore, all the maps are equivariant with respect to the
  available actions of $B(1,2)$ or $B_+(1,2)$.
\end{thm}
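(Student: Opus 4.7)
The plan is to reduce Theorem~\ref{thm:solenoid} to Theorem~\ref{thm:homeo} by recognising $\mathfrak S$ as the mapping torus of $(L(\mathscr A),\mu)$. Concretely, I would construct a homeomorphism
\[F\colon \frac{L(\mathscr A)\times[0,1]}{(z,1)\sim(\mu(z),0)}\longrightarrow\mathfrak S\]
by encoding a pair $(z,t)$, with $t=0.t_1 t_2\dots$ read in binary, as the $\sim$-class of a bi-infinite path in $\mathscr A$ whose non-negative half is $z$ and whose $i$-th backwards extension is the minimal or maximal $\sigma$-preimage according to $t_i$. The boundary identification $(z,1)\sim(\mu(z),0)$ translates the commutation $\widehat\mu\circ\sigma=\sigma\circ\widehat\mu^2$ verified in the preceding lemma, which itself encodes $\alpha^2\zeta=\zeta\alpha$ on the natural extension.

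The main technical obstacle is showing that the fibres of $F$ coincide exactly with $\sim$-classes. The easy direction realises every mapping-torus identification by a bi-infinite path in $\mathscr N$. The harder direction uses nuclearity: because the recurrent states of every $\mathscr A_{\mu^n}$ lie in $\mathscr N$, any bi-infinite path in $\mathscr N$ labeled $w|w'$ is an identity computation outside a bounded central window, so $w$ and $w'$ differ by only a finite transducer computation, i.e.\ by a mapping-torus identification. Granting $F$, parts (1) and (2) follow: $\mathfrak S$ is compact and metrizable as the quotient of the compact metric space $\widehat L(\mathscr A)$ by a closed equivalence relation; it is connected because any clopen partition of $\mathfrak S$ would induce a $\mu$-invariant clopen partition of the fibre $L(\mathscr A)$, contradicting minimality of $\mu$; and the projection onto the $[0,1]$ coordinate is tautologically a fibration over $\R/\Z$ with fibre $L(\mathscr A)$ and monodromy $\mu$.

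For (3), the local product structure arises from the natural past/future decomposition of $\widehat L(\mathscr A)$: a small clopen cylinder $V_s\subseteq L(\mathscr A)$ together with a small arc $V_u\subseteq[0,1]$ furnishes a product neighbourhood. The action of $\zeta$ prepends a minimal edge in $L(\mathscr A)$ (halving cylinder diameters) and doubles on the $[0,1]$ direction (so that going once around the circle after $\zeta$ yields $\mu^2$ on the fibre, as forced by $\zeta\alpha\zeta^{-1}=\alpha^2$), hence contracts the $V_s$ factor and expands the $V_u$ factor. For (4), apply Theorem~\ref{thm:homeo} to the fibre: substituting $(L(\mathscr A),\mu)$ by the skew product $(\widetilde\Z_2\times\Z/2,\,(z,s)\mapsto(z+1,s+\phi(z+1)))$ exhibits $\mathfrak S$ as the $\Z/2$-twisted extension
\[\mathfrak S\;\cong\;\Z/2\cdot\frac{\widetilde\Z_2\times[0,1]}{(z,1)\sim(z+1,0)};\]
the quotient map $\mathfrak S\twoheadrightarrow\mathbb S_2$ is then induced by $\widetilde\Z_2\to\Z_2$ together with $\Z/2\to1$, and its fibres have size $2$ generically and size $4$ over integer points of $\Z_2$ where $\widetilde\Z_2\to\Z_2$ becomes $2$-to-$1$.
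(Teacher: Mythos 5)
Your overall strategy---identifying $\mathfrak S$ with the mapping torus of $(L(\mathscr A),\mu)$ and then feeding Theorem~\ref{thm:homeo} into the fibre---is essentially the route the paper takes: its part~(2) builds the same fibration by reading the negative-time labels as a binary expansion of a point of $\R/\Z$, and its part~(4) concludes precisely by observing that $\mathfrak S$ is the suspension of $\Z/2\cdot\widetilde\Z_2$. However, the step you yourself single out as the main technical obstacle is where your argument breaks. It is not true that a bi-infinite path in $\mathscr N$ labeled $w|w'$ is ``an identity computation outside a bounded central window'': in $\mathscr A_{\mu^{\pm1}}$ the states $\mu_{jk}$ receive no edges from identity states, so any bi-infinite path that meets a $\mu$-state lies in $\mu$-states on a whole half-line $(-\infty,n)$, and possibly for all time (e.g.\ the cycle on $\mu_{ad},\mu_{be}$ reading $(1_a1_b)^\omega|(0_d0_e)^\omega$). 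Nuclearity does not bound this window; what it actually delivers---and what the paper uses it for---is the transitivity of $\sim$, a point your proposal never addresses even though it is needed before the quotient $\widehat L(\mathscr A)/{\sim}$ makes sense.

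The correct replacement for your ``harder direction'' is a structural analysis of $\mathscr A_{\mu^{\pm1}}$: along any bi-infinite path meeting a $\mu$-state the labels are $1_*|0_*$ (resp.\ $0_*|1_*$) for all times $<n$, for some $n\in\Z\cup\{+\infty\}$, then $0_\ell|1_\ell$ once, then identities. If $n<0$ this exhibits the two left tails as the two binary expansions of the same dyadic $t\in(0,1)$ with identical fibre halves; if $n\ge0$ or $n=+\infty$ it forces $t_w=1$, $t_{w'}=0$ and $q'=\mu(q)$ on the forward halves. These are exactly the mapping-torus identifications, so your conclusion is correct, but the justification must be this case analysis rather than a bounded-window claim. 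With that repaired (and with the remark that all states of $\mathscr N$ being final makes $\sim$ closed, via Proposition~\ref{prop:closed}), your remaining deductions---connectedness from minimality of $\mu$, descent of the $B(1,2)$-action, the local product structure, and part~(4) via Theorem~\ref{thm:homeo}---go through and closely track the paper's own proof.
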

\begin{proof}
  First, let us check that the relation ``$w\sim w'\Leftrightarrow$
  there exists a bi-infinite path in $\mathscr N$ labeled $w|w'$'' is
  an equivalence relation: this follows because $\mathscr N$ is
  nuclear. Indeed only transitivity needs to be checked; assume there
  is in $\mathscr N$ a path labeled $w|w'$ and a path labeled
  $w'|w''$. There is then a path $p$ labeled $w|w''$ in
  $\mathscr N^2$. This path can never enter a state labeled $\mu^2$ or
  $\mu^{-2}$, since in $\mathscr N^2$ there is no edge entering such
  states; so $p$ remains in states $\mu,1,\mu^{-1}$ and thus lies in
  $\mathscr N$.

  (1) The set of paths in $\mathscr N$ defines a closed subset of
  $\widehat L(\mathscr A)\times\widehat L(\mathscr A)$, since all
  states of $\mathscr N$ are final (see
  Proposition~\ref{prop:closed}), so the quotient is compact and
  metrizable. Its connectedness will follow from (2), since the circle
  is connected and the action on the fibre is minimal.

  (2) There is a natural map $\mathfrak S\to\mathfrak J$ given by
  restriction and forgetting-of-decorations to the labels on the
  negative part: $p_{w,h}.q\mapsto w$. This defines a map to a
  quotient $\mathfrak J$ of $\{0,1\}^{-\N}$. Now whenever
  $w=1^{-\omega}0x_{-n}\cdots x_{-1}$ and
  $w'=0^{-\omega}1x_{-n}\cdots x_{-1}$ there is a path in
  $\mathscr A_\mu$ labeled $w|w'$ and ending in a state
  $h\in\{a,\dots,f\}$; and conversely every path ending in such a
  state $h$ has a label of this form. There are finally paths labeled
  $1^{-\omega}|0^{-\omega}$ ending in $\mu_{ij}$. These are precisely
  the identifications between binary sequences representing the same
  element in $\R/\Z$; so $\mathfrak J=\R/\Z$ and the fibering map is
  $p_{w,h}.q\mapsto\sum_{n<0}w_n2^n$.

  Every fibre is naturally identified with a right-infinite path in
  $\mathscr A$, namely with an element of $L(\mathscr A)$. The
  monodromy action along the circle is given by the above
  left-infinite paths ending in a state $\mu_{ij}$, since they
  correspond to the identification $1=0$ in $\R/\Z$; these paths
  continue to right-infinite paths giving the action of $\mu$ on
  $L(\mathscr A)$.

  (3) We first check that the action of $B(1,2)$ on
  $\widehat L(\mathscr A)$ descends to $\mathfrak S$. This is obvious
  for $\zeta$, since $\sim$ is shift-invariant. For $\widehat\mu$,
  consider sequences $p_{w,h}.q\sim p_{w',h'}.q'$, and write
  $\widehat\mu(p_{w,h}.q)=p_{w,k}.\mu(q)$. If $q=q'$, then there is a
  left-infinite path in $\mathscr N$ ending at $h=h'$ and labeled
  $w|w'$; then
  $\widehat\mu(p_{w',h'}.q')=p_{w',k}.\mu(q)\sim p_{w,k}.\mu(q)$;
  while if $q\neq q'$ then $q'=\mu^{\pm1}(q)$ and again there is a
  path in $\mathscr N$ ending at a state $\mu^{\pm1}$ and labeled
  $w|w'$; and then
  $\widehat\mu(p_{w',h'}.q')=p_{w',k'}.\mu(q')=p_{w',k'}.\mu^{1\pm1}(q)\sim
  p_{w,k}.\mu(q)$.

  Now the dynamics induced by $\zeta$ on left-infinite paths is
  $p_{\dots w_{-2}w_{-1},h}\mapsto p_{\dots w_{-2},h'}$, and induces
  angle doubling on $\R/\Z$, so is expanding; while on the invariant
  fibre $L(\mathscr A)$ it is $z\mapsto 0_h z$, prepending an edge
  labeled `$0$' to paths, so is contracting. The space $\mathfrak S$
  is covered by $6$ open sets, namely for all $h\in\{a,\dots,f\}$ the
  set of all paths passing through $h$ at time $0$, and decomposes on
  each such subset as a product of stable and unstable varieties:
  through $p_{w,h}.q$ they are respectively
  $V_s=\{p_{w,h}.q':q'\in L(\mathscr A)\text{ starting at }h\}$ and
  $V_u=\{p_{w',h}.q:w'\in\{0,1\}^{-\N}\}$.

  (4) The solenoid $\mathbb S_2$ may be viewed as the quotient of
  $\{0,1\}^\Z$ by the ``asymptotic equivalence'' relation given
  by~\eqref{eq:tau}. The alphabet map $i_j\mapsto i$ and the
  ``morphism of automata''
  $\mathscr A_\mu\twoheadrightarrow\mathscr B_\tau$ induce the map
  $\mathfrak S\to\mathbb S_2$.

  Now $\mathbb S_2$ and $\mathfrak S$ are both suspensions,
  respectively of $\Z_2$ and $\Z/2\cdot\widetilde\Z_2$, so the last
  claim follows.
\end{proof}

We add the remark that, for free Abelian groups, Pontryagin and
Nekrashevych duality are essentially the same (or more precisely
dual). Let us highlight the tautologies involved: let $A$ be an
Abelian group, equipped with an expanding self-map
$T\colon A\righttoleftarrow$. On one hand, one constructs an automatic
action of $A$ on $(A/T(A))^\N$ by acting on coset spaces $A/T^n(A)$
(which correspond to clopens in $(A/T(A))^\N$). Expansivity of $T$
implies the existence of a finite automaton $\mathscr N$ (the
``nucleus'') containing the recurrent subautomaton of the action of
every element of $A$.

Bi-infinite paths in $\mathscr N$ define an equivalence relation on
$(A/T(A))^\Z$, the quotient of which is a solenoid
$\mathfrak S=\widehat{A[T^{-1}]}$. A leaf in $\mathfrak S$ is a real
vector space $V$, on which $A$ acts by translation. Left-infinite
paths in $\mathscr N$ define an equivalence relation on
$(A/T(A))^{-\N}$, the quotient of which is a torus, homeomorphic to
$V/A$. We naturally have $A=\pi_1(V/A)$, and $V$ is the universal
cover of $V/A$.

On the other hand, Pontryagin duality associates with $A$ the torus
$V^*/A^\perp$, since a representation of $A$ in $\R/\Z$ extends
uniquely to $V$, and the orthogonal lattice of $A$ in $V^*$ is by
definition the set of representations that are trivial on $A$.

Nekrasheych's theory also applies also to non-Abelian groups $G$;
given a group $G$, a finite-index subgroup $H$ and a homomorphism
$\theta\colon H\to G$, one obtains (by choosing a transversal of $H$
in $G$) an action of $G$ on $(G/H)^\N$. If $\theta$ is contracting,
then this action is automatic, and also admits a finite nucleus. The
``limit space'' is $(G/H)^{-\N}/{\sim}$, a compact, metrizable space
$\mathfrak X$ equipped with a self-covering map $\sigma$ induced by
the shift. One shouldn't expect $G$ to act on $\mathfrak X$, but the
fundamental group of $\mathfrak X$ acts on iterated fibres of $\sigma$
by monodromy, and lets one recover in this manner the action of $G$ on
$(G/H)^\N$.

In our situation, we cannot directly apply this construction: our
action of $\Z$ on $L(\mathscr A)$ does not permute clopens as is the
case above; and the ``asymptotic equivalence relation'' defines
different kinds of ``glue'', depending on the state of $\mathscr N$ in
which the left-infinite path in $\mathscr N$ ends; different kinds of
glues should be applied to different portions of the space of
left-infinite paths in $\mathscr A$. Said differently, the best one
can hope for is a space $\mathfrak X$ equipped with a collection of
partial self-coverings. In our case, the space $\mathfrak X$ is
\[\begin{tikzpicture}
    \coordinate (b) at (0,0);
    \coordinate (t) at (0,2);
    \coordinate (l) at (-2,1);
    \coordinate (r) at (2,1);
    \foreach \c in {b,t,l,r} { \fill (\c) circle (1pt); }
    \draw (b) .. controls +(165:0.5) and +(-165:0.5) .. node[left] {$b$} (t)
    (t) .. controls +(-15:0.5) and +(15:0.5) .. node[right] {$e$} (b)
    (b) .. controls +(-15:0.5) and +(-105:0.5) .. node[below] {$a$} (r)
    (r) .. controls +(105:0.5) and +(15:0.5) .. node[above] {$d$} (t)
    (t) .. controls +(165:0.5) and +(75:0.5) .. node[above] {$f$} (l)
    (l) .. controls +(-75:0.5) and +(-165:0.5) .. node[below] {$c$} (b);
  \end{tikzpicture}
\]
and there is a self-map $\zeta\colon\mathfrak X\righttoleftarrow$,
which is almost a self-covering, but has branching points at all four
vertices of $\mathfrak X$. It is given on the edges by $a\mapsto d b$
etc.\ as in~\eqref{eq:extendzeta}:
\[\begin{tikzpicture}
    \clip (-3,-1.3) rectangle (3,1.3); 
    \coordinate (b) at (0,-1);
    \coordinate (t) at (0,1);
    \coordinate (l) at (-2,0);
    \coordinate (r) at (2,0);
    \foreach \c in {b,t,l,r} { \fill (\c) circle (1pt); }
    \draw (l) .. controls +(-45:0.7) and +(-180:0.7) .. node[above] {$e$} (b) .. controls +(0:0.7) and +(-135:0.7) .. (r)
    (r) .. controls +(135:0.7) and +(0:0.7) .. node[below] {$b$} (t) .. controls +(-180:0.7) and +(45:0.7) .. (l);
    \foreach \s/\t/\pos/\l/\x/\y in {r/b/above/a/-/+,l/b/above/d/+/+,r/t/below/c/-/-,l/t/below/f/+/-} {
      \draw (\s) .. controls +(\x2.5,\y3.1) and +(\x1.2,\y0.2) .. node[pos=0.1,\pos] {$\l$} (\t);
      }
  \end{tikzpicture}
\]
The reason it is not really a covering is that the connections
$\{b,c\}\to\{a,e\}$ in $\mathfrak X$ should not always be present;
indeed $\mathscr A_\mu$ identifies $p_{1^\omega,b}$ and
$p_{0^\omega,a}$ only when followed by $0_c|1_c$, identifies
$p_{1^\omega,b}$ and $p_{0^\omega,e}$ when followed by $1_a|0_d$,
identifies $p_{1^\omega,c}$ and $p_{0^\omega,a}$ when followed by
$0_b|1_b$, and identifies $p_{1^\omega,c}$ and $p_{0^\omega,e}$ when
followed by $0_f|1_f$.

\subsection{K-theory}
It is often enriching, for example so as to classify them, to compute
the K-theory (a.k.a.\ dimension groups) of dynamical systems. These
may be defined from the C*-algebra generated by the dynamical system,
but we avoid all details to jump to our case of a self-map $\sigma$ of
a totally-disconnected compact space $\Omega$; its $K_0$ group may be
defined as
\[K_0(\Omega,\sigma)=\frac{\{f\colon\Omega\to\Z\text{ continuous}\}}{f\sim f\circ\sigma},
\]
equipped with a ``positive cone'' $K_0^+$, the image of
$\{f\colon\Omega\to\N\}$, and a distinguished element $f\equiv 1$;
see~\cite{herman-putnam-skau:bratteli}*{Theorem~1.4}. We compute it,
using~\cite{herman-putnam-skau:bratteli}*{Theorem~5.4}, as
\[K_0(L(\mathscr A),\sigma)=\injlim (\Z^6,\mathscr
  A)=\Z^4\times\Z[\frac12];
\]
here the $\Z^6$ corresponds to the $6$ vertices of $\mathscr A$, and
in the colimit the `$\mathscr A$' is the adjacency matrix of
$\mathscr A$. The positive cone is
$K_0^+=\{0\}\cup(\Z^4\times\Z_+[\frac12])$.

By classical results, $K_0(\Omega,\sigma)$ is also the
$\Z$-equivariant \v Cech cohomology of the solenoid
$(\mathfrak S,\widehat\mu)$.

\section{Biminimal actions}
It is easy to see on $\mathscr A$ that $L(\mathscr A)$ is minimal: we
are to check that the operation ``take the successor'' is transitive
on finite paths. This is just the connectedness of $\mathscr A$,
namely that there does not exist any proper subautomaton.

\begin{defn}
  Let $G$ be a group acting on a topological space $\Omega$.  The
  action is \emph{biminimal} if
  \[\forall(x,y\in X)\forall(\mathcal U,\mathcal V\text{ open in
    }X)\exists (g\in G):(x\in\mathcal U,y\in\mathcal
    V)\Rightarrow(g(x)\in\mathcal V,g^{-1}(y)\in\mathcal U).\]

  Rephrasing, for every $x,y\in X$ and respective open neighbourhoods
  $\mathcal U,\mathcal V$, there exists $g\in G$ taking $x$ into
  $\mathcal V$ and $\mathcal U$ over $y$.
\end{defn}

Here and below, for $s\in\{a,\dots,f\}$ we use the notation
`$s L(\mathscr A)$' to denote those elements of $L(\mathscr A)$ that
start by the vertex $s$.

\begin{thm}\label{thm:bimin}
  The action of $\Z=\langle\mu\rangle$ on $L(\mathscr A)$ is not
  biminimal.

  More precisely, consider $x=(0_e0_d)^\omega$ and $y=(0_d0_e)^\omega$
  two minimal paths; and surround them by the open sets
  $\mathcal U=e L(\mathscr A)$ and $\mathcal V=d L(\mathscr
  A)$. Then we claim that there is no $n\in\Z$ with
  $\mu^n(x)\in\mathcal V$ and $\mu^{-n}(y)\in\mathcal U$.
\end{thm}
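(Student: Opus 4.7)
The plan is to reduce the biminimality question to a combinatorial one about starting vertices of $\mu$-orbits in $\mathscr A$, and to establish a parity-type invariant that forbids the critical coincidence.

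First, from the Baumslag--Solitar relation $\alpha^2\zeta=\zeta\alpha$ one derives $\sigma\mu^2=\mu\sigma$ on $L(\mathscr A)$. Direct computation on the periodic forms yields $\sigma(x)=y$, $\sigma(y)=x$, $\sigma\mu(x)=y$, and $\sigma\mu^{-1}(x)=\mu^{-1}(y)$; iterating these gives $\sigma\mu^n(x)=\mu^{\lfloor n/2\rfloor}(y)$ for $n\ge0$ and analogous formulas for the sign-flipped and $x/y$-swapped cases. The starting vertex of a path $p\in L(\mathscr A)$ is the source of its first edge $i_j$, determined by $i=\pi(p)\bmod 2$ together with the starting vertex $j$ of $\sigma p$; letting $S(i,v)$ denote the source of the edge $i_v$ (a finite table read off $\mathscr A$) and $a_n,b_n,A_n,B_n$ the starting vertices of $\mu^n(x),\mu^n(y),\mu^{-n}(x),\mu^{-n}(y)$, one obtains coupled recursions such as $a_n=S(n\bmod2,b_{\lfloor n/2\rfloor})$, and by checking $n=1$ plus induction $a_n=b_n=:c_n$ for all $n\ge1$, with $c_1=f$.

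The heart of the proof is the pair of implications, proved by simultaneous strong induction on $m\ge1$:
\[(\mathrm{I})\ c_m=d\Longrightarrow A_m=d,\qquad(\mathrm{II})\ c_m\in\{a,e\}\Longrightarrow B_m\in\{a,e\}.\]
The inductive step splits on the parity of $m$. The even subcases use the preimage structure $S(0,\cdot)^{-1}(d)=\{a,e\}$ and $S(0,\cdot)^{-1}(\{a,e\})=\{d\}$ to reduce to the hypothesis at $m/2$. The odd subcases use the Thue--Morse cocycle identity $s(\mu^k x)=u_k=s(\mu^{-k-1}x)$, a direct consequence of the formula~\eqref{eq:phi} from Theorem~\ref{thm:homeo}, to match the $s$-class of $c_{(m-1)/2}$ with that of $A_{(m+1)/2}$ and hence place the latter in $\{b,c,f\}$, so that $S(1,\cdot)$ sends it into $\{a,e\}$ as required.

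Finally, a nontrivial biminimality witness would be an $n\ne0$ with $\mu^n(x)\in\mathcal V=dL(\mathscr A)$ and $\mu^{-n}(y)\in\mathcal U=eL(\mathscr A)$. Reading off the recursions, $a_n=d$ forces $n$ even and $c_{n/2}\in\{a,e\}$, while $B_n=e$ combined with $n$ even forces $A_{n/2}=d$; invariant~(I) together with its converse (a direct consequence of (II)) gives $A_{n/2}=d\iff c_{n/2}=d$, contradicting $c_{n/2}\in\{a,e\}$. The $n<0$ case reduces to $n>0$ via the $180^\circ$-rotational automorphism of $\mathscr A$ which commutes with $\mu$. The main obstacle is setting up the simultaneous induction in the previous paragraph; the Thue--Morse cocycle match in the odd subcase is the only step requiring genuine dynamical input, everything else reducing to finite bookkeeping in the six-state source map $S$.
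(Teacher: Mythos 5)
Your overall strategy is sound and is in fact the paper's own argument unrolled by hand: the coupled recursions $a_{2m+\epsilon}=S(\epsilon,b_m)$, $b_{2m+\epsilon}=S(\epsilon,a_m)$ (valid for all $m\in\Z$) are exactly what the paper packages into a finite automaton reading $n$ in binary, least significant bit first, before observing that $-\Lambda(x,d)$ consists of integers of odd $2$-adic valuation while $\Lambda(y,e)$ consists of integers of even $2$-adic valuation. The problem is that the two invariants you place at the heart of the induction are false as stated. With the normalization forced by the proof ($x$ the minimal path through $e,d,e,\dots$ and $y$ the one through $d,e,d,\dots$, so that $x\in\mathcal U$ and $y\in\mathcal V$), one computes $A_1=b$, $B_1=a$, whence $A_2=S(0,a)=d$, $B_2=S(0,b)=c$, $B_3=S(1,d)=f$, $A_6=S(0,B_3)=S(0,f)=c$, while $c_1=f$, $c_2=c$, $c_3=e$, $c_6=d$. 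Thus (I) fails at $m=6$ (where $c_6=d$ but $A_6=c$) and (II) fails at $m=3$ (where $c_3=e$ but $B_3=f$). The roles of $A$ and $B$ appear to have been interchanged: the true invariants are $c_m=d\Rightarrow B_m=d$ and $c_m\in\{a,e\}\Rightarrow A_m\in\{a,e\}$, and it is the \emph{latter} that closes the argument, since the case $n=2k>0$ reduces precisely to excluding $c_k\in\{a,e\}$ and $A_k=d$ simultaneously, and the case $n<0$ asks directly for $c_m=e$ and $A_m=d$. These corrected invariants do support a simultaneous induction of the shape you describe, the odd subcase resting on the identity $s(\mu^k x)=u_k=s(\mu^{-k-1}y)$ --- note $y$, not $x$, in the last term: one has $s(\mu^{-k-1}x)=1-u_k$.

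Two further steps do not survive scrutiny. First, your closing deduction rests on ``$A_{n/2}=d\iff c_{n/2}=d$'', which is false ($A_2=d$ but $c_2=c$), and the claim that the converse of (I) is ``a direct consequence of (II)'' is unsupported: (II) constrains $B_m$, not $A_m$, and no implication between $A_m=d$ and $c_m=d$ follows from it. Second, the half-turn automorphism of $\mathscr A$ (exchanging $a\leftrightarrow f$, $b\leftrightarrow e$, $c\leftrightarrow d$) does commute with $\mu$, but it carries $x$, $y$, $\mathcal V$, $\mathcal U$ to the minimal paths through $b,c$ and the clopens $cL(\mathscr A)$, $bL(\mathscr A)$; it therefore converts the $n<0$ case into a \emph{different} instance of the problem, not into the $n>0$ case of the same one. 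As noted above, the corrected invariant $c_m\in\{a,e\}\Rightarrow A_m\in\{a,e\}$ disposes of $n<0$ with no reduction at all. With these repairs your proof goes through and is essentially equivalent to the paper's automaton computation.
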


The proof will occupy the remainder of this section.  For
$z\in L(\mathscr A)$ and $s\in\{a,\dots,f\}$, define
\[\Lambda(z,s)=\{n\in\Z:\mu^n(z)\in s L(\mathscr A)\};\]
then $\{\Lambda(z,a),\dots,\Lambda(z,f)\}$ forms for all
$z\in L(\mathscr A)$ a partition of $\Z$, and our claim amounts to
checking that $-\Lambda(x,d)\cap\Lambda(y,e)=\emptyset$.

Let $\delta\colon\Sigma\to\{0,1\}$ be the map $i_j\mapsto i$ that
forgets letters' decorations. Write $z=z_0 z'$; then we have equations
\[\Lambda(z,s)=\bigsqcup_{(s\to t)\in\Sigma}2\Lambda(z',t)+\delta(s\to t)-\delta(z_0).
\]
Indeed if $z_0$ is minimal then $\mu^{2n}(z)=w_0\;\mu^n(z')$ with
$w_0$ minimal and $\mu^{2n+1}(z)=w_0\;\mu^n(z')$ with $w_0$
maximal; and similar equations hold if $z_0$ is maximal.

We thus have equations
\begin{align*}
  \Lambda(x,a) &=(2\Lambda(y,b)+1)\cup(2\Lambda(y,c)+1),\\
  \Lambda(x,b) &=(2\Lambda(y,c))\cup(2\Lambda(y,a)+1),\\
  \Lambda(x,c) &=(2\Lambda(y,b))\cup(2\Lambda(y,f)),\\
  \Lambda(x,d) &=(2\Lambda(y,a))\cup(2\Lambda(y,e)),\\
  \Lambda(x,e) &=(2\Lambda(y,d))\cup(2\Lambda(y,f)+1),\\
  \Lambda(x,f) &=(2\Lambda(y,d)+1)\cup(2\Lambda(y,e)+1),\\
\end{align*}
and exactly the same equations with $x,y$ interchanged. These determine the sets $\Lambda(x,s)$ and $\Lambda(y,s)$ once one specifies the ``initial values''
\[0\in\Lambda(x,e),\quad-1\in\Lambda(x,b),\quad 0\in\Lambda(y,d),\quad-1\in\Lambda(y,a).\]

Let us change notations, and read integers in binary, LSB first
(positive numbers end in $0^\infty$ and negative ones end in
$1^\infty$); then the following automaton recognizes $\Lambda(x,s)$
and $\Lambda(y,s)$ when given initial state $s$, and for appropriate
choices of final states depending on whether we wish to recognize
$\Lambda(x,s)$ or $\Lambda(y,s)$:
\[\begin{fsa}[every state/.style={minimum size=7mm,text width=7mm,inner sep=0mm,align=center}]
    \node[state] (110) at (-1.5,2) {$c$};
    \node[state] (011) at (1.5,2) {$f$};
    \node[state] (*01) at (-3,1) {\textcolor{gray}{$de$}};
    \node[state] (*0*) at (0,1) {\textcolor{gray}{$ade$}};
    \node[state] (*10) at (3,1) {\textcolor{gray}{$bc$}};
    \node[state] (101) at (-4.5,0) {$e$};
    \node[state] (010) at (4.5,0) {$b$};
    \node[state] (10*) at (-3,-1) {\textcolor{gray}{$ac$}};
    \node[state] (*1*) at (0,-1) {\textcolor{gray}{$bcf$}};
    \node[state] (01*) at (3,-1) {\textcolor{gray}{$bf$}};
    \node[state] (100) at (-1.5,-2) {$a$};
    \node[state] (001) at (1.5,-2) {$d$};
    
    \path (110) edge[loop left] node {$\8\8$} (110) edge node[above right=-1mm] {$\8\9$} (*0*);
    \path (011) edge[loop right] node {$\9\9$} (011) edge node[above left=-1mm] {$\9\8$} (*0*);

    \path (*01) edge[loop left] node {$\9\9$} (*01) edge node {$\8\8$} (*0*) edge node[near start] {$\8\9$} (*1*);
    \path (*0*) edge[loop above] node[above=3mm] {$\8\8$} node {$\9\9$} (*0*) edge[bend left] node[right,pos=0.35] {$\9\8$} node[right,pos=0.65] {$\8\9$} (*1*);
    \path (*10) edge[loop right] node {$\9\9$} (*10) edge node {$\8\9$} (*0*) edge node[near start] {$\8\8$} (*1*);

    \path (101) edge node[below] {$\9\9$} (*01) edge node[above] {$\8\8$} (10*);
    \path (010) edge node[below] {$\9\9$} (*10) edge node[above] {$\8\8$} (01*);
    
    \path (10*) edge[loop left] node {$\8\8$} (10*) edge node[near start] {$\9\9$} (*0*) edge node {$\9\8$} (*1*);
    \path (*1*) edge[loop below] node[below=3mm] {$\9\9$} node {$\8\8$} (*1*) edge[bend left] node[left,pos=0.35] {$\9\8$} node[left,pos=0.65] {$\8\9$} (*0*);
    \path (01*) edge[loop right] node {$\8\8$} (01*) edge node[near start] {$\9\8$} (*0*) edge node {$\9\9$} (*1*);

    \path (100) edge[loop left] node {$\9\9$} (100) edge node[below right=-1mm] {$\9\8$} (*1*);
    \path (001) edge[loop right] node {$\8\8$} (001) edge node[below left=-1mm] {$\8\9$} (*1*);
  \end{fsa}
\]
(The shaded state $de$ recognizes $\Lambda(x,d)\cup\Lambda(x,e)$,
etc.)  Since we are interested in recognizing integers, we only accept
infinite strings over $\{\8,\9\}$ that end in
$\8^\omega\cup\9^\omega$. The remaining data that need be specified
are that, to accept $\Lambda(x,s)$, one should start at state $s$ in
the above diagram and accept only $\8^\omega$ at state $ade$ and
$\9^\omega$ at state $bcf$; while to accept $\Lambda(y,s)$ one should
accept only $\8^\omega$ and $\9^\omega$ at $ade$.


Thus for example in terms of regular expressions
$\Lambda(x,d)=(\8\8)^*\8\9E\9^\omega\cup(\8\8)^*\8\9\8^*1E\8^\omega$,
where $E=\varepsilon\cup\8^*\9\8^*\9E$ is the set of words with an
even number of $\9$'s (``evil numbers''); here is the corresponding
automaton, with initial and final vertices indicated by free incoming
(respectively outgoing) arrow, and $\epsilon$ denotes an empty
transition:
\[\begin{fsa}[every state/.style={minimum size=6mm}]
    \node[state,rectangle,rounded corners,initial] (L) at (-2.5,0) {$\Lambda(x,d)$};
    \node[state] (*0*) at (0,1) {};
    \node[state] (*1*) at (0,-1) {};
    \node[state,accepting] (1i) at (2,1) {};
    \node[state,accepting] (0i) at (2,-1) {};

    \path (L) edge[loop above] node {$\8\8$} (L) edge node[below left=-1mm] {$\8\9$} (*1*);
    \path (*0*) edge[loop above] node[above=3mm] {$\8\8$} node {$\9\9$} (*0*) edge[bend left] node[right,pos=0.35] {$\9\8$} node[right,pos=0.65] {$\8\9$} (*1*) edge node {$\epsilon$} (1i);
    \path (*1*) edge[loop below] node[below=3mm] {$\9\9$} node {$\8\8$} (*1*) edge[bend left] node[left,pos=0.35] {$\9\8$} node[left,pos=0.65] {$\8\9$} (*0*) edge node {$\epsilon$} (0i);
    \path (1i) edge[loop above] node {$\8\8$} (1i);
    \path (0i) edge[loop below] node {$\9\9$} (0i);
  \end{fsa}
\]

We also easily compute $\Lambda(y,e)$:
\[\begin{fsa}[every state/.style={minimum size=6mm}]
    \node[state] (*01) at (-3,1) {};
    \node[state] (*0*) at (0,1) {};
    \node[state,rectangle,rounded corners,initial] (L) at (-5,0) {$\Lambda(y,e)$};
    \node[state] (10*) at (-3,-1) {};
    \node[state] (*1*) at (0,-1) {};
    \node[state,accepting] (1i) at (2,1) {};
    \node[state,accepting] (0i) at (2,-1) {};
    
    \path (*01) edge[loop above] node {$\9\9$} (*01) edge node {$\8\8$} (*0*) edge node[near start] {$\8\9$} (*1*);
    \path (*0*) edge[loop above] node[above=3mm] {$\8\8$} node {$\9\9$} (*0*) edge[bend left] node[right,pos=0.35] {$\9\8$} node[right,pos=0.65] {$\8\9$} (*1*) edge [bend left=10] node {$\epsilon$} (0i) edge node {$\epsilon$} (1i);
    \path (L) edge node[below] {$\9\9$} (*01) edge node[above] {$\8\8$} (10*);    
    \path (10*) edge[loop below] node {$\8\8$} (10*) edge node[near start] {$\9\9$} (*0*) edge node {$\9\8$} (*1*);
    \path (*1*) edge[loop below] node[below=3mm] {$\9\9$} node {$\8\8$} (*1*) edge[bend left] node[left,pos=0.35] {$\9\8$} node[left,pos=0.65] {$\8\9$} (*0*);
    \path (1i) edge[loop above] node {$\9\9$} (1i);
    \path (0i) edge[loop below] node {$\8\8$} (0i);
  \end{fsa}
\]

Now it is easy to compute, using the automata, $-\Lambda(x,d)$; it just amounts to switching $\8^\omega$ and $\9^\omega$:
\[\begin{fsa}[every state/.style={minimum size=6mm}]
    \node[state,rectangle,rounded corners,initial] (L) at (-2.5,0) {$-\Lambda(x,d)$};
    \node[state] (*0*) at (0,1) {};
    \node[state] (*1*) at (0,-1) {};
    \node[state,accepting] (1i) at (2,1) {};
    \node[state,accepting] (0i) at (2,-1) {};

    \path (L) edge[loop above] node {$\8\8$} (L) edge node[below left=-1mm] {$\8\9$} (*1*);
    \path (*0*) edge[loop above] node[above=3mm] {$\8\8$} node {$\9\9$} (*0*) edge[bend left] node[right,pos=0.35] {$\9\8$} node[right,pos=0.65] {$\8\9$} (*1*) edge node {$\epsilon$} (1i);
    \path (*1*) edge[loop below] node[below=3mm] {$\9\9$} node {$\8\8$} (*1*) edge[bend left] node[left,pos=0.35] {$\9\8$} node[left,pos=0.65] {$\8\9$} (*0*) edge node {$\epsilon$} (0i);
    \path (1i) edge[loop above] node {$\9\9$} (1i);
    \path (0i) edge[loop below] node {$\8\8$} (0i);
  \end{fsa}
\]

From these descriptions, it is easy to see
$(-\Lambda(x,d))\cap\Lambda(y,e)=\emptyset$, since strings in
$-\Lambda(x,d)$ start by $\8^{2n+1}\9$ while strings in $\Lambda(y,e)$
start by $\8^{2n}\9$. Thus there are no $n\in\Z$ with
$\mu^n(x)\in d L(\mathscr A)$ and $\mu^{-n}(y)\in e L(\mathscr A)$,
and Theorem~\ref{thm:bimin} is proven.

We may translate these calculations back to the original presentation
of $L(\mathscr A)$ as a subshift $\Omega\subset\{0,1\}^\Z$: the
sequences $x,y$ are
\[x=\overline u.u=\zeta(y),\qquad y=u.u=\zeta(x),
\]
surrounded by respective neighbourhoods $\mathcal U=\{*1.01*\}$ and
$\mathcal V=\{*0.01*\}$.

\section{Acknowledgments}
I am grateful to Dominic Franc\oe ur and Volodymyr Nekrashevych for
discussions on these topics.

\begin{bibsection}
  \begin{biblist}
    \bibselect{math}
  \end{biblist}
\end{bibsection}
\end{document}